\documentclass{article}
\usepackage{graphicx} 
\usepackage{amssymb}
\usepackage{amsfonts}
\author{Jules Flin$^\dagger$}

\usepackage{amsmath, amsthm, amssymb, amsfonts}
\usepackage{dsfont}

\theoremstyle{plain}
\newtheorem{theorem}{Theorem}[section] 
\newtheorem{proposition}[theorem]{Proposition} 
\newtheorem{lemma}[theorem]{Lemma}
\newtheorem{corollary}[theorem]{Corollary}

\theoremstyle{definition}
\newtheorem{definition}[theorem]{Definition}
\newtheorem{example}[theorem]{Example}

\theoremstyle{remark}
\newtheorem{remark}[theorem]{Remark}

\usepackage[dvipsnames, svgnames, table]{xcolor}

\usepackage[colorlinks=true, linkcolor=blue, citecolor=blue, urlcolor=blue]{hyperref}

\usepackage{authblk}
\title{Stationary Distribution of Brownian Motion in the Half-Plane with Two-sided Reflections}
\author{Jules Flin}
\affil{SAMOVAR, Télécom SudParis, Institut Polytechnique de Paris, 92120 Palaiseau, France}
\date{}

\begin{document}
\maketitle
\begin{abstract}
We investigate the unique stationary measure of a positive recurrent reflecting Brownian motion in the upper half-plane, where the direction of reflection is constant on each half-axis. The Laplace transform of the stationary distribution is characterized by a functional equation, whose resolution is reduced to solving a discontinuous Riemann boundary value problem. By applying the Sokhotski-Plemelj formulas, we derive an explicit expression for the Laplace transform. Finally, we establish the local behavior of the stationary density at the origin and its asymptotics along the boundary axes using Tauberian theorems and asymptotic analysis.
\end{abstract}

\section{Introduction and main results}

The reflected Brownian motion (RBM) in the first quadrant was originally introduced as the heavy-traffic limit for queueing networks by Harrison and Reiman \cite{Harrison_1978,Reiman_1984}. More generally, such processes can be defined within any cone. Through simple linear transformations, all convex cones can be reduced to the quarter-plane, while all non-convex cones are mapped to the three-quarter plane \cite{BoMe,threequarter}. Among the limiting cases excluded from these standard reductions, one finds the wedge of opening $\pi$, which is equivalent to studying an RBM in the upper half-plane. This paper investigates the stationary distribution of such a process.\\

\noindent \textbf{Notations.} \textit{Due to the symmetries of the model, we will frequently employ the symbols $\pm$ and $\mp$. Within a single expression, these should be understood as representing two opposite signs consistently. Furthermore, we use the shorthand notation $\sum_{\pm}a_{\pm} := a_+ + a_-$.}

\subsection*{Reflected Brownian motion in the upper half-plane}
The model at the core of this paper depends on the following set of parameters: a non-singular covariance matrix $\Sigma$ (symmetric, positive definite), two reflection vectors $R_\pm=(r_\pm,1)^\intercal$ (with $r_\pm \in\mathbb R$), and a drift vector $\mu\in\mathbb R^2$. We consider the stochastic process $Z$ by
\begin{equation}\label{eq:trajectoires}
   Z_t=Z_0+\sqrt{\Sigma}W_t+\mu t+\sum_{\pm}R_\pm \ell_\pm(t). 
\end{equation}
Here, $Z_0\in\mathbb R\times \mathbb R_+$ is the initial position, $W$ is a 2-dimensional standard Brownian motion, and $\ell_\pm(t)$ is the local time of $Z$ accumulated up to time $t$ on $\mathbb R_\pm\times \{0\}$, so that the process is confined to the upper half-plane 
$$\mathbb H_+:=\mathbb R\times \mathbb R_+=\{(x,y)\in\mathbb R^2: y\geqslant 0\},$$
and reflected in the direction of $R_\pm$ whenever it hits $\mathbb R_\pm\times \{0\}$.\\

The process $Z$ can be viewed as a Reflected Brownian Motion (RBM) in a wedge with an opening angle $\beta=\pi$. In their full generality, RBMs in orthants are of fundamental importance in queueing theory. They were originally introduced in~\cite{Harrison_1978,Reiman_1984}, and their core properties, such as recurrence and semimartingale property, have since been extensively studied~\cite{Hobson_ROGERS_1993,Varadhan_Williams_1985a,Williams_1985}. The invariant measure of RBMs was investigated by Williams~\cite{Williams_1985b} in the driftless case, and has recently been explicitly computed for opening angles $\beta<\pi$~\cite{BoMe,dreyfus2025degeneratesystemsbrownianparticles,Franceschi_Raschel_2019} and $\beta>\pi$~\cite{threequarter,Fayolle_Franceschi_Raschel_2023} with non-zero drift. These recent works rely either on Tutte's invariant approach or on the resolution of Riemann boundary value problems, drawing a natural analogy with the analytical study of similar discrete models~\cite{livrejaune}. In the present paper, we aim to investigate the limiting case $\beta=\pi$.

\begin{remark}\label{rem:1D}
In our setting, the second coordinate of the process, $(Z_t^{(2)})_{t \ge 0}$, behaves exactly as a one-dimensional Brownian motion reflected at $0$ with drift $\mu_2$. Indeed, the dynamics of this coordinate can be written as:
\begin{equation}
    Z_t^{(2)} = Z_0^{(2)} + W_t^{(2)} + \mu_2 t + \ell_+(t) + \ell_-(t).
\end{equation}
By definition, the local times $\ell_+(t)$ and $\ell_-(t)$ increase only when the process $Z_t$ hits the horizontal axis, which is strictly equivalent to $Z_t^{(2)} = 0$. Consequently, the sum $\ell_+(t) + \ell_-(t)$ corresponds exactly to the local time at $0$ of the one-dimensional process $Z_t^{(2)}$. 
\end{remark}

When $\Sigma$ is the identity matrix, the behavior of this diffusion is largely governed by the parameter $\alpha$, introduced by Varadhan and Williams \cite{Varadhan_Williams_1985a}, defined by
\begin{equation}\label{eq:alpha}
    \alpha:=\frac{\delta_++\delta_--\pi}{\beta},
\end{equation}
where $\delta_\pm\in (0,\pi)$ are the angles depicted in Figure~\ref{fig:model}. As shown in Proposition~\ref{prop:whitening} below, we can assume, without loss of generality, that $\Sigma=\text{I}_2$. Elementary trigonometry then yields
$$\delta_\pm =\frac{\pi}{2}\mp \arctan(r_\pm),$$
which leads to the simplified expression
\begin{equation}\label{eq:alphabis}
    \alpha = \frac{\arctan(r_-) - \arctan(r_+)}{\pi}.
\end{equation}

\begin{figure}
    \centering
    \includegraphics[width=0.65\linewidth]{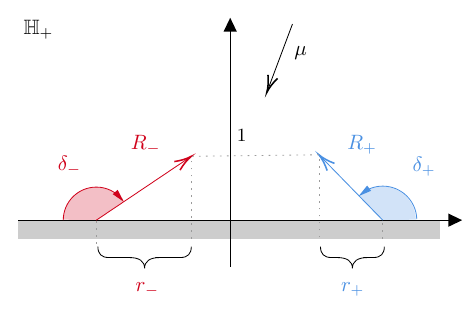}
    \caption{Definition of the angles $\delta_\pm$ through the parameters of the model. Here, $r_->0$ and $r_+<0$.}
    \label{fig:model}
\end{figure}

\begin{remark}[Existence and recurrence] When $\beta=\pi$, it is straightforward to verify that
\begin{equation*}
    \alpha = \frac{\delta_++\delta_-}{\pi}-1 \in (-1,1).
\end{equation*}
According to~\cite{Varadhan_Williams_1985a}, this ensures that the process $Z$ exists as the unique solution to a submartingale problem. Furthermore, it follows from~\cite{Williams_1985} that $Z$ is a semimartingale (often referred to as an SRBM, for Semimartingale Reflected Brownian Motion). Adapting the results from~\cite{Hobson_ROGERS_1993} to the half-plane, one can show that the process is positively recurrent if and only if
\begin{equation}\label{eq:rec}
   \mu_2<0\text{ and } r_+ < \frac{\mu_1}{\mu_2} < r_-.
\end{equation}
In particular, for a fixed pair of reflection vectors, there exists a drift $\mu$ that makes the process positively recurrent if and only if
\begin{equation}\label{eq:valeuralpha}
    \alpha \in (0,1).
\end{equation}
One can refer to~\cite{BoMe} for a comprehensive overview of the properties of the process derived from the value of $\alpha$.
\end{remark}

Under the condition~\eqref{eq:rec}, $Z$ admits a unique stationary distribution $\boldsymbol{\pi}$. We will denote by $\phi$ its Laplace transform, that is
$$\phi(x,y)=\int\!\!\!\!\int_{\mathbb H_+}\exp(xu+yv)\mathrm{d}\boldsymbol{\pi}(u,v).$$
Note that the Laplace transform $\phi(x,y)$ is well-defined and converges absolutely for all $(x,y)\in\mathbb C^2$ such that $\mathfrak{Re}(y)\leqslant 0$ and $\mathfrak{Re}(x)=0$.\\

We conclude this section by showing, as announced, that we can, without loss of generality, restrict ourselves to the case $\Sigma=\mathrm{I}_2$.

\begin{proposition}[Whitening]\label{prop:whitening}
    For any covariance matrix $\Sigma$, there exists a unique orientation-preserving linear transformation $T$ that maps the upper half-plane onto itself and such that $T\Sigma T^\intercal=\mathrm{I}_2$. If $Z$ is a reflected Brownian motion in the upper half-plane with parameters $(\Sigma,\mu,R_\pm)$, then so is $TZ$, with parameters $(\mathrm{I}_2,T\mu,TR_\pm)$. Letting $\boldsymbol{\pi}$ and $\widetilde{\boldsymbol{\pi}}$ denote their respective invariant measures, $\widetilde{\boldsymbol{\pi}}$ is the \textit{pushforward} of $\boldsymbol{\pi}$ by $T$. Explicitly, if $\Sigma=(\sigma_{ij})$, then 
$$T=\frac{1}{\sqrt{\sigma_{22}\det\Sigma}}\left(\begin{array}{cc}
        \sigma_{22} &  -\sigma_{12} \\
         0& \det\Sigma
    \end{array}\right),\quad \pi(u,v)=\frac{1}{\sqrt{\det\Sigma}}\widetilde{\pi}\left(\frac{\sigma_{22}u-\sigma_{12}v}{\sqrt{\sigma_{22}\det\Sigma}},\frac{v}{\sqrt{\sigma_{22}}}\right),$$
where $\pi$ and $\widetilde{\pi}$ are the respective densities of $\boldsymbol{\pi}$ and $\widetilde{\boldsymbol{\pi}}$.
\end{proposition}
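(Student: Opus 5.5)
The plan has three parts: first characterize all linear maps preserving the upper half-plane with orientation, then impose $T\Sigma T^\intercal=\mathrm I_2$ to pin down $T$ uniquely, and finally push the defining equation \eqref{eq:trajectoires} and the stationary measure through $T$.

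\textbf{Step 1: the shape of $T$.} A linear map sends $\mathbb H_+$ onto itself exactly when it preserves the boundary line $\mathbb R\times\{0\}$ and sends $(0,1)$ into the open upper half-plane. Preserving the line forces $T_{21}=0$, and the second condition gives $T_{22}>0$. Orientation preservation means $\det T=T_{11}T_{22}>0$, hence $T_{11}>0$. So $T$ is upper triangular with positive diagonal. The condition $T\Sigma T^\intercal=\mathrm I_2$ is equivalent to $\Sigma=T^{-1}T^{-\intercal}$. Since $T^{-1}$ is again upper triangular with positive diagonal, this is a Cholesky-type factorization of $\Sigma$ (the "upper–lower" variant). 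Such a factorization exists and is unique because $\Sigma$ is positive definite. This settles uniqueness.

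\textbf{Step 2: the explicit $T$.} To check the formula, I would simply multiply out. Write $T=c\begin{pmatrix}\sigma_{22}&-\sigma_{12}\\0&\det\Sigma\end{pmatrix}$ with $c=(\sigma_{22}\det\Sigma)^{-1/2}$.
\begin{itemize}
\item The $(2,2)$ entry of $T\Sigma T^\intercal$ is $c^2(\det\Sigma)^2\sigma_{22}=1$.
\item The $(1,2)$ entry is $c^2\det\Sigma\,(\sigma_{22}\sigma_{12}-\sigma_{12}\sigma_{22})=0$.
\item The $(1,1)$ entry is $c^2\bigl(\sigma_{22}^2\sigma_{11}-2\sigma_{22}\sigma_{12}^2+\sigma_{12}^2\sigma_{22}\bigr)=c^2\sigma_{22}\det\Sigma=1$.
\end{itemize}
The diagonal entries of $T$ are positive, so by Step 1 this is the unique $T$.

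\textbf{Step 3: transferring the process.} Multiply \eqref{eq:trajectoires} by $T$:
$$TZ_t=TZ_0+T\sqrt\Sigma W_t+T\mu t+\sum_\pm TR_\pm\ell_\pm(t).$$
The process $T\sqrt\Sigma W$ is a Brownian motion with covariance $T\Sigma T^\intercal=\mathrm I_2$, so it equals $\widetilde W$ for some standard Brownian motion $\widetilde W$.

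Because $T$ is triangular with positive diagonal, it maps $\mathbb R_\pm\times\{0\}$ onto itself (the first coordinate is scaled by $T_{11}>0$). Hence $\ell_\pm$ are still the local times of $TZ$ on the same half-axes. One subtlety is the normalization of local time. Since $T_{22}>0$, the second component of $TR_\pm$ is $T_{22}$. Writing $TR_\pm=T_{22}(\tilde r_\pm,1)^\intercal$ and rescaling $\tilde\ell_\pm=T_{22}\ell_\pm$ puts the reflection vectors back in the normalized form of the paper. With this, $TZ$ solves the same type of equation with parameters $(\mathrm I_2,T\mu,TR_\pm)$, understood up to positive scaling of $R_\pm$.

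Uniqueness in law of the solution (the submartingale problem of \cite{Varadhan_Williams_1985a}) gives that $TZ$ is the RBM with these parameters. Its stationary law is then the pushforward $T_\ast\boldsymbol\pi$: if $Z_0\sim\boldsymbol\pi$, then $TZ_t\sim T_\ast\boldsymbol\pi$ for all $t$. By uniqueness of the stationary distribution, $\widetilde{\boldsymbol\pi}=T_\ast\boldsymbol\pi$.

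\textbf{Step 4: the density formula.} The change-of-variables formula gives $\pi(z)=|\det T|\,\widetilde\pi(Tz)$. Here $\det T=c^2\sigma_{22}\det\Sigma\cdot\det\Sigma/\det\Sigma$; more directly, $\det T=\sqrt{\det\Sigma^{-1}}=1/\sqrt{\det\Sigma}$, since $T\Sigma T^\intercal=\mathrm I_2$. Finally, $Tz$ has coordinates $\bigl((\sigma_{22}u-\sigma_{12}v)/\sqrt{\sigma_{22}\det\Sigma},\;v\sqrt{\det\Sigma}/\sqrt{\sigma_{22}}\bigr)$.

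I expect the main obstacle to be matching this second coordinate with the displayed formula, which reads $v/\sqrt{\sigma_{22}}$. The two agree when $\det\Sigma=1$, so the stated formula may carry a typo or rely on an implicit normalization. I would flag this rather than force agreement. Everything else is routine linear algebra.
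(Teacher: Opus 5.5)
Your route is the paper's own argument with the details supplied: uniqueness comes from preservation of $\mathbb H_+$ (your Cholesky-type factorization), and the rest is a change of variables. Steps 1, 3 and 4 are sound, and rescaling the local times by $T_{22}$ is a point the paper skips. The problem is Step 2, which contains an arithmetic error and leads you to the wrong diagnosis. For the displayed matrix,
\[
(T\Sigma T^\intercal)_{22}=T_{22}^2\sigma_{22}=c^2(\det\Sigma)^2\sigma_{22}=\det\Sigma ,
\]
not $1$. So the displayed $T$ gives $T\Sigma T^\intercal=\mathrm{diag}(1,\det\Sigma)$ and does not whiten $\Sigma$ unless $\det\Sigma=1$. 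Your Step 4 already shows the symptom. Your first expression $c^2\sigma_{22}\det\Sigma\cdot\det\Sigma/\det\Sigma$ equals $1$, which contradicts the value $1/\sqrt{\det\Sigma}$ forced by $T\Sigma T^\intercal=\mathrm I_2$, and you wrote both without reconciling them.

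The fix is to let Step 1 produce $T$ instead of checking the given one. Take $T$ upper triangular with positive diagonal:
\begin{itemize}
\item the $(2,2)$ entry gives $T_{22}=1/\sqrt{\sigma_{22}}$;
\item the $(1,2)$ entry gives $T_{12}=-T_{11}\sigma_{12}/\sigma_{22}$;
\item the $(1,1)$ entry then gives $T_{11}^2\det\Sigma/\sigma_{22}=1$.
\end{itemize}
Hence
\[
T=\frac{1}{\sqrt{\sigma_{22}\det\Sigma}}\begin{pmatrix}\sigma_{22}&-\sigma_{12}\\0&\sqrt{\det\Sigma}\end{pmatrix}.
\]
So the typo in the statement is the entry $\det\Sigma$ in the matrix, which should be $\sqrt{\det\Sigma}$. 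The density formula is correct as stated: for this $T$, $\det T=1/\sqrt{\det\Sigma}$ and $Tz=\bigl((\sigma_{22}u-\sigma_{12}v)/\sqrt{\sigma_{22}\det\Sigma},\,v/\sqrt{\sigma_{22}}\bigr)$, exactly matching $\pi(z)=|\det T|\,\widetilde\pi(Tz)$. Step 3 also silently needs this corrected $T$: with the displayed matrix, $T\sqrt\Sigma W$ would have covariance $\mathrm{diag}(1,\det\Sigma)$ and would not be a standard Brownian motion.
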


\begin{proof}    
In general, there are infinitely many whitening transformations (any whitening followed by a rotation), but the constraint that the upper half-plane must be preserved ensures uniqueness. The remainder of the proof follows from a standard change of variables formula.
\end{proof}

\subsection*{Main results and structure of the paper}

In Section~\ref{sec:bvp}, we establish a functional equation relating $\phi$ and the Laplace transforms of the boundary measures (Proposition~\ref{prop:FE}). This functional equation is easily transformed into a boundary value problem (BVP) on the real line (Proposition~\ref{prop:bvp_disc}). However, the coefficient appearing in this BVP is not continuous on $\overline{\mathbb{R}}=\mathbb{R}\cup\{\pm \infty\}$, which complicates its resolution. Using analytic techniques, we therefore reduce it to a continuous problem (Proposition~\ref{prop:bvp_cont}).

In Section~\ref{sec:sp}, we present the Sokhotski-Plemelj formulas used to solve this type of BVP, and we prove that the functions involved in our problem satisfy the appropriate hypotheses. We then provide an integral formula for the Laplace transforms of the boundary measures (Theorem~\ref{thm:laplace}), from which the bivariate Laplace transform is deduced using the functional equation of Section~\ref{sec:bvp}. For a particular example, we obtain a closed-form expression for the Laplace transform, allowing us to invert it and explicitly recover the density of the invariant measure, see Proposition~\ref{prop:ex}.

Finally, in Section~\ref{sec:asymp}, we apply Tauberian theorems and results from asymptotic analysis to study the behavior of the density of the invariant measure at the origin, and at infinity along the axes of reflection. More precisely, we show that
$$\pi(r\cos \theta,r\sin\theta)\sim C\sin\left(\delta_+-\alpha\theta\right)r^{-\alpha},\quad \text{as }r\to 0^+,$$
for some constant $C$ (see Theorem~\ref{thm:asymp}), and we obtain asymptotics of the form $$\pi(u,0)\asymp |u|^{-\kappa}\exp(-\gamma |u|),\quad \text{as }u\to\pm\infty,$$ (that is, up to a positive multiplicative constant), with explicit exponents $\kappa$ and $\gamma$ (Theorem~\ref{thm:infty}). For the exponent $\kappa$, we identify a phase transition on each boundary with respect to the parameters $r_+$ and $r_-$. This transition occurs at a critical value $r_\pm^\star$ that depends solely on the drift direction $\mu_1/\mu_2$. On either side of this critical parameter, the exponent $\kappa$ is either $0$ or $3/2$, while in the critical regime, $\kappa=1/2$.

\section{Boundary Value Problem(s)}\label{sec:bvp}
As is standard in the study of reflected Brownian motion in a wedge, the starting point is a functional equation often referred to as the \textit{Basic Adjoint Relationship} (BAR), which will be stated in Proposition~\ref{prop:FE}. It involves the \textit{lateral measures} $\boldsymbol{\pi}_\pm$, defined by
$$\boldsymbol{\pi}_\pm(A)=\mathbb{E}_{\boldsymbol{\pi}}\left[\int_0^1 \mathds1_A\left(Z_t^{(1)}\right)\mathrm{d}\ell_\pm(t)\right],$$
through its (univariate) Laplace transforms $\phi_\pm(x)$
$$\phi_\pm(x):=\int_{\mathbb R_\pm}\exp(xu)\mathrm{d}\boldsymbol{\pi}_\pm(u).$$
Note that the Laplace transform $\phi_+$ (\textit{resp.} $\phi_-$) converges for $\mathfrak{Re}(x)\leqslant 0$ (\textit{resp.} $\mathfrak{Re}(x)\geqslant 0$). In particular, $\phi(x,y)$, $\phi_+(x)$ and $\phi_-(x)$ are simultaneously well-defined for $\mathfrak{Re}(x)=0$ and $\mathfrak{Re}(y)\leqslant 0$.
\begin{proposition}[Kernel functional equation]\label{prop:FE} For all $x\in i\mathbb R$ and $y\in\mathbb C$ such that $\mathfrak{Re}(y)\leqslant 0$, the Laplace transforms $\phi$, $\phi_+$ and $\phi_-$ satisfy
\begin{equation}\label{eq:FE}
    K(x,y)\phi(x,y)+\sum_\pm k_\pm(x,y)\phi_\pm(x)=0,
\end{equation}
where 
$$K(x,y)=\frac{1}{2}\left(x^2+y^2\right)+\mu_1 x +\mu_2 y\text{ and }k_\pm(x,y)=r_\pm x+y.$$
\end{proposition}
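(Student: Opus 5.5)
The plan is to apply Itô's formula to the test function $f(z)=\exp(xu+yv)$, $z=(u,v)$, along the semimartingale $Z$ of \eqref{eq:trajectoires}, with $\Sigma=\mathrm{I}_2$ (legitimate by Proposition~\ref{prop:whitening}), and then take expectations under the stationary law $\boldsymbol{\pi}$. For $x\in i\mathbb R$ and $\mathfrak{Re}(y)\leqslant 0$ we have $|f(z)|=\exp(\mathfrak{Re}(y)v)\leqslant 1$ on $\mathbb H_+$. This boundedness is what makes everything integrable. Since $\nabla f=(x,y)f$ and $\Delta f=(x^2+y^2)f$, Itô's formula gives
\begin{equation*}
f(Z_t)=f(Z_0)+\int_0^t \nabla f(Z_s)\cdot \mathrm{d}W_s+\int_0^t K(x,y)f(Z_s)\,\mathrm{d}s+\sum_\pm\int_0^t (r_\pm x+y)f(Z_s)\,\mathrm{d}\ell_\pm(s),
\end{equation*}
because $\nabla f\cdot\mu+\frac12\Delta f=K(x,y)f$ and $\nabla f\cdot R_\pm=k_\pm(x,y)f$.

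Take $t=1$ and $Z_0\sim\boldsymbol{\pi}$, and apply $\mathbb E_{\boldsymbol{\pi}}$ to each term.
\begin{itemize}
\item By stationarity, $\mathbb E_{\boldsymbol{\pi}}f(Z_1)=\mathbb E_{\boldsymbol{\pi}}f(Z_0)=\phi(x,y)$, so these two terms cancel.
\item The stochastic integral is a true martingale, since its integrand is bounded by $|(x,y)|$. Its expectation therefore vanishes.
\item By Fubini and stationarity, the $\mathrm{d}s$ term contributes $K(x,y)\phi(x,y)$.
\item The measure $\mathrm{d}\ell_\pm$ is carried by $\{Z^{(2)}=0,\ Z^{(1)}\in\mathbb R_\pm\}$, where $f(Z_s)=\exp(xZ^{(1)}_s)$. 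By the definition of $\boldsymbol{\pi}_\pm$, the corresponding term is $k_\pm(x,y)\phi_\pm(x)$.
\end{itemize}
Collecting these gives \eqref{eq:FE}.

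The main obstacle is to justify that the boundary terms are finite and that Fubini applies there. This requires $\mathbb E_{\boldsymbol{\pi}}[\ell_\pm(1)]<\infty$. By Remark~\ref{rem:1D}, $\ell_+(1)+\ell_-(1)$ is the local time at $0$ of a one-dimensional reflected Brownian motion with drift $\mu_2<0$, started from its stationary exponential law. Taking expectations in $Z^{(2)}_1=Z^{(2)}_0+W^{(2)}_1+\mu_2+\ell_+(1)+\ell_-(1)$ then gives $\mathbb E[\ell_+(1)+\ell_-(1)]=-\mu_2<\infty$. This also shows that $\boldsymbol{\pi}_\pm$ are finite measures, so $\phi_\pm$ converges on $i\mathbb R$.

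A secondary point is the extension from $\mathfrak{Re}(y)<0$ to the closed half-plane. Boundedness by $1$ already covers $\mathfrak{Re}(y)\leqslant 0$ directly, so no continuity argument should be needed.
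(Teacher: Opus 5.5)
Your proposal is correct and takes the same route as the paper: Itô's formula for $e^{xu+yv}$ along $Z$, expectation under $\boldsymbol{\pi}$, and the identities $\mathcal{L}f=K(x,y)f$ and $\nabla f\cdot R_\pm=k_\pm(x,y)e^{xu}$ on the boundary. The integrability details you add are exactly the points the paper delegates to Harrison--Williams: the bound $|f|\leqslant 1$, which makes the stochastic integral a true martingale and justifies Fubini, and the identity $\mathbb{E}_{\boldsymbol{\pi}}[\ell_+(1)+\ell_-(1)]=-\mu_2$ obtained via Remark~\ref{rem:1D}.
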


\begin{proof}
Let $\mathcal{L}$ be the infinitesimal generator of the process in the interior of $\mathbb H_+$, defined for a twice continuously differentiable function $f$ by
$$ \mathcal{L}f(u,v)=\frac{1}{2}\Delta f(u,v) + \mu \cdot \nabla f(u,v).$$
Applying Itô's formula to the test function $f(u,v)=e^{xu+yv}$ evaluated at the semimartingale $Z_t$ yields
$$ f(Z_1) - f(Z_0) = M_1 + \int_0^1 \mathcal{L}f(Z_s) \mathrm{d}s + \sum_{\pm} \int_0^1 \nabla f(Z_s) \cdot R_{\pm} \mathrm{d}\ell_{\pm}(s), $$
where $(M_t)_t$ is a local martingale. Taking the expectation with respect to the stationary distribution $\boldsymbol{\pi}$, the left-hand side and the expectation of $M_1$ vanish. Using the definition of the lateral measures $\boldsymbol{\pi}_{\pm}$, we obtain the equation
$$ \iint_{\mathbb{H}_+} \mathcal{L}f(u,v) \mathrm{d}\boldsymbol{\pi}(u,v) + \sum_{\pm} \int_{\mathbb{R}_{\pm}} \nabla f(u,0) \cdot R_{\pm} \mathrm{d}\boldsymbol{\pi}_{\pm}(u)=0.$$
Observe that $\mathcal{L}f(u,v) = K(x,y)f(u,v)$ and $\nabla f(u,0) \cdot R_{\pm} = k_{\pm}(x,y) e^{xu}$ to obtain the functional equation. See~\cite{Harrison_Williams_1987} for a more detailed proof. 
\end{proof}

\begin{remark}[ADN elliptic system]\label{rem:ADN} It is worth pointing out that the characterization of the stationary measure obtained via our functional equation can be equivalently reformulated in terms of an elliptic Boundary Value Problem. More precisely, one can show that the density $\pi$ of the invariant measure $\boldsymbol{\pi}$ satisfies the following partial differential equation system:
\begin{equation}\label{eq:EBVP_system}
\left\{\begin{array}{ll}
\displaystyle\mathcal{L}^*\pi(u,v):=\left(\frac{1}{2}\Delta-\mu\cdot \nabla\right)\pi(u,v)=0 & \text{for all } (u,v)\in \overset{\circ}{\mathbb{H}}_+, \\[8pt]
\left(\displaystyle\frac{\partial}{\partial v}-r_\pm\frac{\partial}{\partial u}-2\mu_2\right)\pi(u,0)=0&\text{for all }\pm u>0.
\end{array}\right.
\end{equation}
See~\cite{Harrison_Reiman_1981} for more details on this equivalence. It belongs to a much broader class of systems, often referred to as an \textit{ADN system} (after Agmon, Douglis, and Nirenberg). These boundary value problems have received considerable attention and generated an extensive literature in classical analysis~\cite{Agmon_Douglis_Nirenberg_1959,Agmon_Douglis_Nirenberg_1964,Dauge_1988,Grisvard_2011}. This alternative formulation will prove particularly convenient for the proof of Theorem~\ref{thm:asymp}.
\end{remark}

The following corollary relates the density $\pi$ of the stationary distribution $\boldsymbol{\pi}$ to the densities $\pi_\pm$ associated with the boundary measures $\boldsymbol{\pi}_\pm$.

\begin{corollary} For almost every $u \in \mathbb{R}_\pm$, we have $\pi_\pm(u) = \frac{1}{2}\pi(u,0)$.
\end{corollary}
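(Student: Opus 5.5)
We will show that $\pi_\pm(u)=\tfrac12\pi(u,0)$ by reading the functional equation~\eqref{eq:FE} asymptotically as $y\to-\infty$. The idea is that when $\mathfrak{Re}(y)\to-\infty$, the Laplace transform $\phi(x,y)$ only sees a thin layer near the boundary $\{v=0\}$. Its leading behaviour is therefore governed by the boundary trace $\pi(u,0)$.

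\textbf{Step 1: boundary-layer asymptotics of $\phi$.} Fix $x\in i\mathbb R$. For real $y\to-\infty$, substitute $v=s/|y|$:
$$\phi(x,y)=\int_{\mathbb R}\int_0^\infty e^{xu}e^{yv}\pi(u,v)\,\mathrm dv\,\mathrm du=\frac{1}{|y|}\int_{\mathbb R}e^{xu}\int_0^\infty e^{-s}\pi(u,s/|y|)\,\mathrm ds\,\mathrm du .$$
Assuming $\pi$ is continuous up to the boundary away from the origin and suitably integrable, this gives
$$y\,\phi(x,y)\longrightarrow-\int_{\mathbb R}e^{xu}\pi(u,0)\,\mathrm du .$$

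\textbf{Step 2: pass to the limit in~\eqref{eq:FE}.} Divide~\eqref{eq:FE} by $y$. Since $K(x,y)=\tfrac12 y^2+O(y)$, we have $K(x,y)\phi(x,y)/y\sim \tfrac12 y\phi(x,y)$, which tends to $-\tfrac12\int e^{xu}\pi(u,0)\,\mathrm du$. Also $k_\pm(x,y)/y\to1$. Hence
$$\sum_\pm\phi_\pm(x)=\frac12\int_{\mathbb R}e^{xu}\pi(u,0)\,\mathrm du\qquad (x\in i\mathbb R).$$
Uniqueness of Fourier transforms of finite measures then identifies the measure $\boldsymbol{\pi}_++\boldsymbol{\pi}_-$ with $\tfrac12\pi(u,0)\,\mathrm du$. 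The supports $\mathbb R_+$ and $\mathbb R_-$ meet only at $\{0\}$, which carries no mass because the right-hand side is absolutely continuous. Restricting to each half-line gives $\pi_\pm(u)=\tfrac12\pi(u,0)$ for almost every $\pm u>0$.

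\textbf{Alternative route.} One can avoid regularity of $\pi$ by working with Remark~\ref{rem:1D}. The second coordinate $Z^{(2)}$ is a reflected one-dimensional Brownian motion with unit variance. The occupation-time formula for its local time at $0$, $L=\ell_++\ell_-$, gives
$$\mathbb E_{\boldsymbol{\pi}}\int_0^1 g(Z^{(1)}_t)\,\mathrm dL_t=\lim_{\varepsilon\to0}\frac1{2\varepsilon}\,\mathbb E_{\boldsymbol{\pi}}\int_0^1 g(Z^{(1)}_t)\mathds 1_{\{Z^{(2)}_t<\varepsilon\}}\,\mathrm dt .$$
Here we use the semimartingale local-time normalization: $L$ is the local time of $Z^{(2)}$ at $0$ for the reflected process, which counts twice the symmetric local time. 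Stationarity turns the right-hand side into $\lim_{\varepsilon\to0}\frac1{2\varepsilon}\int\!\!\int_{v<\varepsilon}g(u)\pi(u,v)\,\mathrm du\,\mathrm dv=\tfrac12\int g(u)\pi(u,0)\,\mathrm du$. This is the same factor $\tfrac12$. Taking $g$ supported in $\mathbb R_\pm$ isolates $\ell_\pm$, because $\ell_\pm$ only grows when $Z^{(1)}\in\mathbb R_\pm$.

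\textbf{Main obstacle.} Either route needs to justify the limit. That means enough regularity of $\pi$ near $\{v=0\}$: continuity of the trace, plus domination to exchange limit and integral. The difficulty is the possible singularity $r^{-\alpha}$ at the origin with $\alpha<1$, which is integrable on lines, together with exponential decay at infinity.

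I would first obtain boundary continuity away from the origin from elliptic regularity for the system~\eqref{eq:EBVP_system}. Near the origin I would use a crude bound of the form $\pi\lesssim r^{-\alpha}$ as a dominating function. If that regularity is unavailable at this point in the paper, the fallback is the measure-level argument: test~\eqref{eq:FE} against smooth functions and identify the limit only in the distributional sense, which still yields the almost-everywhere statement.
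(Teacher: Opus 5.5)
Your main route is exactly the paper's argument. The paper takes your Step 1 as the initial value theorem $\lim_{y\to-\infty}y\phi(x,y)=-\int_{\mathbb R}e^{xu}\pi(u,0)\,\mathrm{d}u$, divides \eqref{eq:FE} by $y$, lets $y\to-\infty$, and concludes by injectivity of the two-sided Laplace transform. Your proposal is therefore correct, and your handling of the point $\{0\}$, the regularity caveat the paper leaves implicit, and the occupation-time alternative (which also gives the factor $\tfrac12$) are sound additions.
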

\begin{proof} 
    By the initial value theorem, 
    $$\lim_{y\to-\infty}y\phi(x,y)=-\int_{\mathbb R}e^{xu}\pi(u,0)\mathrm{d}u.$$
    Dividing the functional equation of Proposition~\ref{prop:FE} by $y < 0$, and letting it tend to $-\infty$, one obtains
    \begin{align*}
        0 &= -\frac{1}{2}\int_{\mathbb R}\pi(u,0)e^{xu}\mathrm{d}u+\sum_{\pm}\int_{\mathbb R_\pm}\pi_\pm(u)e^{xu}\mathrm{d}u\\
          &= \sum_{\pm}\left(\int_{\mathbb R_\pm}\left(\pi_\pm(u)-\frac{\pi(u,0)}{2}\right)e^{xu}\mathrm{d}u\right).
    \end{align*}
    Equivalently, the two-sided Laplace transform of the function 
    $$F(u):=\begin{cases}
         \pi_-(u)-\frac{1}{2}\pi(u,0) & \text{ if } u<0,  \\[2mm]
         \pi_+(u)-\frac{1}{2}\pi(u,0) & \text{ if } u>0,
    \end{cases}$$
    is the zero function. By the injectivity of this transformation, $F=0$ almost everywhere. 
\end{proof}

The formulation of the Boundary Value Problem becomes more convenient if we set $\varphi_\pm(z):=\phi_\pm(iz)$ for all $z$ for which the right-hand side is well-defined.

\begin{proposition}[Discontinuous BVP]\label{prop:bvp_disc} The functions $\varphi_+$ and $\varphi_-$ are respectively analytic in the upper and the lower half-plane, and satisfy the boundary condition
    \begin{equation}\label{eq:BVP}
    \varphi_+(t)=G(t)\varphi_-(t),
    \end{equation}
    for all $t\in\mathbb R$, where 
    \begin{equation}\label{eq:G}
        G(t):=-\frac{r_- it-\mu_2- \sqrt{\mu_2^2-2\mu_1it+t^2}}{r_+it-\mu_2- \sqrt{\mu_2^2-2\mu_1it+t^2}}.
    \end{equation}
\end{proposition}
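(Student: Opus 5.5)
The plan is to specialise the functional equation of Proposition~\ref{prop:FE} to $x=it$ with $t\in\mathbb R$, and to choose $y$ as a root of the kernel so that the unknown $\phi$ disappears.

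\textbf{Analyticity.} Since $\boldsymbol{\pi}_+$ is a finite measure carried by $\mathbb R_+$, we have $\varphi_+(z)=\int_{\mathbb R_+}e^{izu}\,\mathrm d\boldsymbol{\pi}_+(u)$. For $\mathfrak{Im}(z)>0$ the integrand is bounded by $e^{-u\,\mathfrak{Im}(z)}\le 1$, so the integral converges absolutely and locally uniformly. Differentiation under the integral sign (or Morera's theorem) then shows that $\varphi_+$ is analytic in the upper half-plane. It is also continuous up to $\mathbb R$ by dominated convergence. The argument for $\varphi_-$ in the lower half-plane is symmetric. Finiteness of $\boldsymbol{\pi}_\pm$ follows from its definition as an expected local time over $[0,1]$ under stationarity; this expectation is finite because $Z$ is a semimartingale under~\eqref{eq:rec}.

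\textbf{Eliminating $\phi$.} Fix $t\in\mathbb R$ and set $x=it$. The equation $K(it,y)=0$ reads $y^2+2\mu_2y-t^2+2i\mu_1t=0$, whose roots are $y=-\mu_2\pm\sqrt{\mu_2^2-2i\mu_1t+t^2}$, using the principal branch of the square root. The quantity $\Delta(t)=\mu_2^2+t^2-2i\mu_1t$ has $\mathfrak{Re}\,\Delta(t)=\mu_2^2+t^2>0$. It therefore never meets the cut $(-\infty,0]$, and its principal root $\sqrt{\Delta(t)}$ has real part at least $\sqrt{\mu_2^2+t^2}$ up to a positive factor; indeed $\mathfrak{Re}\sqrt{w}\ge\sqrt{\mathfrak{Re}\,w}$ whenever $\mathfrak{Re}\,w>0$.

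The key step is to choose the root $Y(t)=-\mu_2-\sqrt{\Delta(t)}$. It satisfies $\mathfrak{Re}\,Y(t)\le-\mu_2-\sqrt{\mu_2^2+t^2}\le 0$, because $-\mu_2\le|\mu_2|$; note that $\mu_2<0$ plays no essential role here. Hence $(it,Y(t))$ lies in the domain of validity of Proposition~\ref{prop:FE}. The term $K\phi$ vanishes there, since $\phi(it,Y(t))$ is finite, leaving
$$(r_+it+Y(t))\,\varphi_+(t)+(r_-it+Y(t))\,\varphi_-(t)=0.$$
Solving for $\varphi_+(t)$ gives $\varphi_+(t)=-\frac{r_-it+Y(t)}{r_+it+Y(t)}\varphi_-(t)$, which is exactly~\eqref{eq:BVP} with $G$ as in~\eqref{eq:G}.

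\textbf{Main obstacle.} The delicate point is checking that the denominator $r_+it-\mu_2-\sqrt{\Delta(t)}$ never vanishes on $\mathbb R$. Its real part is $-\mu_2-\mathfrak{Re}\sqrt{\Delta(t)}\le-\mu_2-\sqrt{\mu_2^2+t^2}$, which is strictly negative unless $t=0$ and $\mu_2\le0$. In that case, under~\eqref{eq:rec} we have $\mu_2<0$, and at $t=0$ the denominator equals $-\mu_2-|\mu_2|=0$. So $t=0$ is a genuine common zero of numerator and denominator.

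We will handle it by continuity: $G$ extends continuously to $t=0$. To find the limit, expand $\sqrt{\Delta(t)}=|\mu_2|-i\mu_1t/|\mu_2|+O(t^2)$, using $\mu_2<0$. This gives
$$r_\pm it-\mu_2-\sqrt{\Delta(t)}=it\,(r_\pm-\mu_1/\mu_2)+O(t^2),$$
so $G(0)=-(r_--\mu_1/\mu_2)/(r_+-\mu_1/\mu_2)$. This value is finite and nonzero precisely because of~\eqref{eq:rec}. The identity~\eqref{eq:BVP} then holds at $t=0$ by continuity of $\varphi_\pm$.
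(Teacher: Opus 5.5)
Your proposal is correct and follows the paper's proof: evaluate the kernel equation at $(it,Y^-(it))$, where $\mathfrak{Re}\,Y^-(it)\leqslant 0$, so that the $K\phi$ term drops out; isolate $\varphi_+(t)$; and handle $t=0$ by continuous extension, which the paper defers to Lemma~\ref{lemma:G(R)}. You are somewhat more explicit than the paper, since you check directly that the denominator vanishes only at $t=0$ (via $\mathfrak{Re}\sqrt{w}\geqslant\sqrt{\mathfrak{Re}\,w}$) and compute $G(0)$ inline from the first-order expansion.
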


\begin{proof} For any $x\in \mathbb C$, the equation $K(x,\,\cdot\,)=0$ admits two roots (counted with multiplicity), given by 
$$Y^\pm(x)=-\mu_2\pm \sqrt{\mu_2^2-2\mu_1x-x^2}.$$
A direct computation shows that
$$\mathfrak{Re}\left(Y^-(it)\right) = -\mu_2 - \sqrt{\frac{\mu_2^2 + t^2 + \sqrt{(\mu_2^2 + t^2)^2 + 4\mu_1^2 t^2}}{2}}\leqslant 0,$$
so that we can evaluate~\eqref{eq:FE} at $(x,y)=(it,Y^-(it))$ to obtain
$$0=\sum_\pm k_\pm(it,Y^-(it))\varphi_\pm(t).$$
Isolating $\varphi_+$ leads to \eqref{eq:BVP}, with
$$G(t):=-\frac{k_-(it,Y^-(it))}{k_+(it,Y^-(it))}=-\frac{r_-it-\mu_2-\sqrt{\mu_2^2-2\mu_1 it +t^2}}{r_+it-\mu_2-\sqrt{\mu_2^2-2\mu_1 it +t^2}}.$$
A priori, this last step is only valid for $t\ne 0$ since the denominator vanishes at the origin. However, Lemma~\ref{lemma:G(R)} establishes that the boundary condition extends continuously to $t=0$, alongside other properties of $G$. The domain of analyticity of $\varphi_\pm$ is obtained by rotating the one of $\phi_\pm$.
\end{proof}
\begin{lemma}[About the curve $G(\mathbb R)$] \label{lemma:G(R)}The coefficient $G$ and its image $G(\mathbb R)\subset\mathbb C$ satisfy the following set of properties:
    \begin{enumerate}
    \item $G$ is \textit{hermitian}, that is $G(-t)=\overline{G(t)}$ for all $t\in\mathbb R$, and $G(\mathbb R)$ is symmetric with respect to the real axis.
    \item $G$ can be continuously extended at $t=0$ with $G(0)=-\displaystyle\frac{\mu_1-r_-\mu_2}{\mu_1-r_+\mu_2}\in\mathbb R_{+}^*$.
        \item The endpoints of $G(\mathbb R)$ are given by $G(\pm \infty):=\lim_{t\to\pm\infty}G(t)=\displaystyle -\frac{r_-\pm i}{r_+\pm i}$.
        \item The intersection $G(\mathbb R)\cap \mathbb R_{-}$ is empty.
    \end{enumerate}
\end{lemma}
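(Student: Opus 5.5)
The plan is to work throughout with the notation of the proof of Proposition~\ref{prop:bvp_disc}: set $s(t):=\sqrt{\mu_2^2-2\mu_1 it+t^2}$ (principal branch) and $w(t):=Y^-(it)=-\mu_2-s(t)$. Then
$$G(t)=-\frac{r_-it+w(t)}{r_+it+w(t)}.$$
Since $\mathfrak{Re}(\mu_2^2-2\mu_1it+t^2)=\mu_2^2+t^2>0$, the radicand never meets the branch cut $\mathbb R_-$. Hence $s$ is continuous on $\mathbb R$ and satisfies $\mathfrak{Re}\, s(t)>0$. Moreover, the displayed formula for $\mathfrak{Re}(Y^-(it))$ together with $\mu_2<0$ (from~\eqref{eq:rec}) gives $\mathfrak{Re}\,w(t)<0$ for every $t\neq 0$. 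The reason is that $\mathfrak{Re}\,s(t)>|\mu_2|$ strictly as soon as $t\ne0$. This strict inequality is the one fact the whole proof rests on.

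For (1), the principal square root commutes with complex conjugation off the cut, so $s(-t)=\overline{s(t)}$. The numerator and denominator of $G$ are therefore conjugated under $t\mapsto -t$, which gives $G(-t)=\overline{G(t)}$. The symmetry of $G(\mathbb R)$ follows at once.

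For (2), use $s(0)=|\mu_2|=-\mu_2$, so both numerator and denominator vanish at $t=0$. Expand $s(t)=|\mu_2|\bigl(1-\mu_1 it/\mu_2^2+O(t^2)\bigr)=-\mu_2+\mu_1 it/\mu_2+O(t^2)$. This gives $r_\pm it+w(t)=it\,(r_\pm-\mu_1/\mu_2)+O(t^2)$, hence
$$G(t)\to-\frac{r_--\mu_1/\mu_2}{r_+-\mu_1/\mu_2}=-\frac{\mu_1-r_-\mu_2}{\mu_1-r_+\mu_2}.$$
This value is positive by~\eqref{eq:rec}: we have $r_--\mu_1/\mu_2>0>r_+-\mu_1/\mu_2$, so in particular the limit is finite.

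For (3), note that $s(t)=|t|\sqrt{1-2\mu_1 i/t+\mu_2^2/t^2}=|t|+O(1)$. Dividing numerator and denominator by $t$ gives $(r_\pm it+w(t))/t\to r_\pm i\mp 1$ as $t\to\pm\infty$. Therefore
$$G(\pm\infty)=-\frac{r_-i\mp1}{r_+i\mp1}.$$
Multiplying numerator and denominator by $-i$ turns this into $-(r_-\pm i)/(r_+\pm i)$, as claimed.

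For (4), which I expect to be the crux (though short), argue by contradiction. Suppose that $G(t)=-\lambda$ with $\lambda\geqslant0$ and $t\neq0$; the case $t=0$ is settled by (2). Then $r_-it+w=\lambda(r_+it+w)$, i.e.
$$(1-\lambda)\,w(t)=it\,(\lambda r_+-r_-).$$
If $\lambda=1$, this forces $r_+=r_-$, contradicting $r_+<r_-$ from~\eqref{eq:rec}. Otherwise $w(t)$ is purely imaginary, contradicting $\mathfrak{Re}\,w(t)<0$. The same argument with $\lambda$ replaced by a vanishing denominator ($w=-r_+it$) shows that $G$ is finite for $t\ne0$, so $G$ is well defined and continuous on $\mathbb R$.

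For completeness, I would also check that the endpoints $G(\pm\infty)$ are not in $\mathbb R_-$. Indeed, $(r_-\pm i)/(r_+\pm i)$ is real and positive only if $r_-=r_+$, which is again excluded by~\eqref{eq:rec}. The only delicate point throughout is the bookkeeping of the branch of $s$, and the positivity of the radicand's real part disposes of it.
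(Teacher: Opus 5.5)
Your proposal is correct and essentially follows the paper's proof. Parts (1)--(3) are the same conjugation, Taylor-expansion and large-$t$ computations. For (4), your contradiction argument shows that a value $G(t)=-\lambda$ with $t\neq 0$ would force either $r_+=r_-$ or $\mathfrak{Re}\,w(t)=0$. It rests on the same two facts as the paper's explicit formula $\mathfrak{Im}\,G(t)\propto(\mu_2+u)(r_--r_+)t$, namely $r_+\neq r_-$ and $u(t)=\mathfrak{Re}\,s(t)>|\mu_2|$ for $t\neq 0$. Your version is slightly more economical because it avoids computing $u$ and $v$, and it also checks explicitly two points the paper leaves implicit: that the denominator of $G$ does not vanish for $t\neq 0$, and that $G(\pm\infty)\notin\mathbb R_-$.
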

\noindent The curve $G(\mathbb R)$ is plotted in Figure~\ref{fig:G(R)} for various parameter sets. 

\begin{proof} 
\begin{enumerate}
    \item One can check that $Y^-(-it)=\overline{Y^-(it)}$, which immediately implies, by the definition of $G$, that $G(-t)=\overline{G(t)}$. As a direct consequence, the image $G(\mathbb R)$ is symmetric with respect to the real axis.
    \item Near $t=0$, a direct computation using the first-order Taylor expansion $\sqrt{\mu_2^2-2\mu_1it+t^2} = -\mu_2+i\frac{\mu_1}{\mu_2}t+o(t)$ shows that
    $$G(t)=-\frac{\displaystyle\left(r_--\frac{\mu_1}{\mu_2}\right) +o(1)}{\displaystyle\left(r_+-\frac{\mu_1}{\mu_2}\right)+o(1)}\xrightarrow[t\to 0]{}-\frac{\mu_1-r_-\mu_2}{\mu_1-r_+\mu_2}.$$
    The fact that $G(0)>0$ follows directly from the recurrence relation~\eqref{eq:rec}.
    \item A similar computation yields the value of $G(+\infty)$, from which we deduce that of $G(-\infty)=\overline{G(+\infty)}$.
    \item It will be useful to decompose $G(t)$ into its real and imaginary parts. First, let $\Delta=\Delta(t):=\mu_2^2-2\mu_1it+t^2$ (so that $Y^-(it)=-\mu_2-\sqrt{\Delta}$) and write $\sqrt{\Delta}=u+iv$. Then,
$$G(t)=-\frac{(\mu_2+u)^2+(r_-t-v)(r_+t-v)}{(\mu_2+u)^2+(r_+t-v)^2}+i\frac{(\mu_2+u)(r_--r_+)t}{(\mu_2+u)^2+(r_+t-v)^2}.$$
Here, $u=u(t)$ and $v=v(t)$ are explicitly given by
$$\left\{\begin{array}{l}
\displaystyle u(t)=\sqrt{\frac{\mu_2^2+t^2+\sqrt{(\mu_2^2+t^2)^2+4\mu_1^2t^2}}{2}},   \\
     \displaystyle v(t)=-\mathrm{sgn}(\mu_1t)\sqrt{\frac{-\mu_2^2-t^2+\sqrt{(\mu_2^2+t^2)^2+4\mu_1^2t^2}}{2}}.
\end{array}\right.$$
    The imaginary part of $G$ vanishes if, and only if, $t=0$, $r_+=r_-$ or $\mu_2+u=0$. Let us show that the last two alternatives never happen. On the one hand, according to the recurrence condition~\eqref{eq:rec}, $r_-\ne r_+$. On the other hand, the inequality
    $$u(t)=\sqrt{\frac{\mu_2^2+t^2+\sqrt{(\mu_2^2+t^2)^2+4\mu_1^2t^2}}{2}}\geqslant \sqrt{\frac{\mu_2^2+\sqrt{\mu_2^4}}{2}}=|\mu_2|,$$
    holds, with equality if and only if $t=0$. It follows that $\mathfrak{Im}(G(t))=0$ is equivalent to $t=0$. Furthermore, $G(0)>0$ according to the second point of this lemma, which leaves $G(\mathbb R)\cap\mathbb R_{-}$ empty. \qedhere
\end{enumerate}
\end{proof}

\begin{remark}
    Note that, due to the recurrence condition~\eqref{eq:rec}, $r_-\ne r_+$ and hence $G(-\infty)\ne G(+\infty)$. This discontinuity of the coefficient $G$ at infinity justifies the terminology \textit{discontinuous} BVP. 
\end{remark}

\begin{figure}[t]
    \centering
    \includegraphics[width=\linewidth]{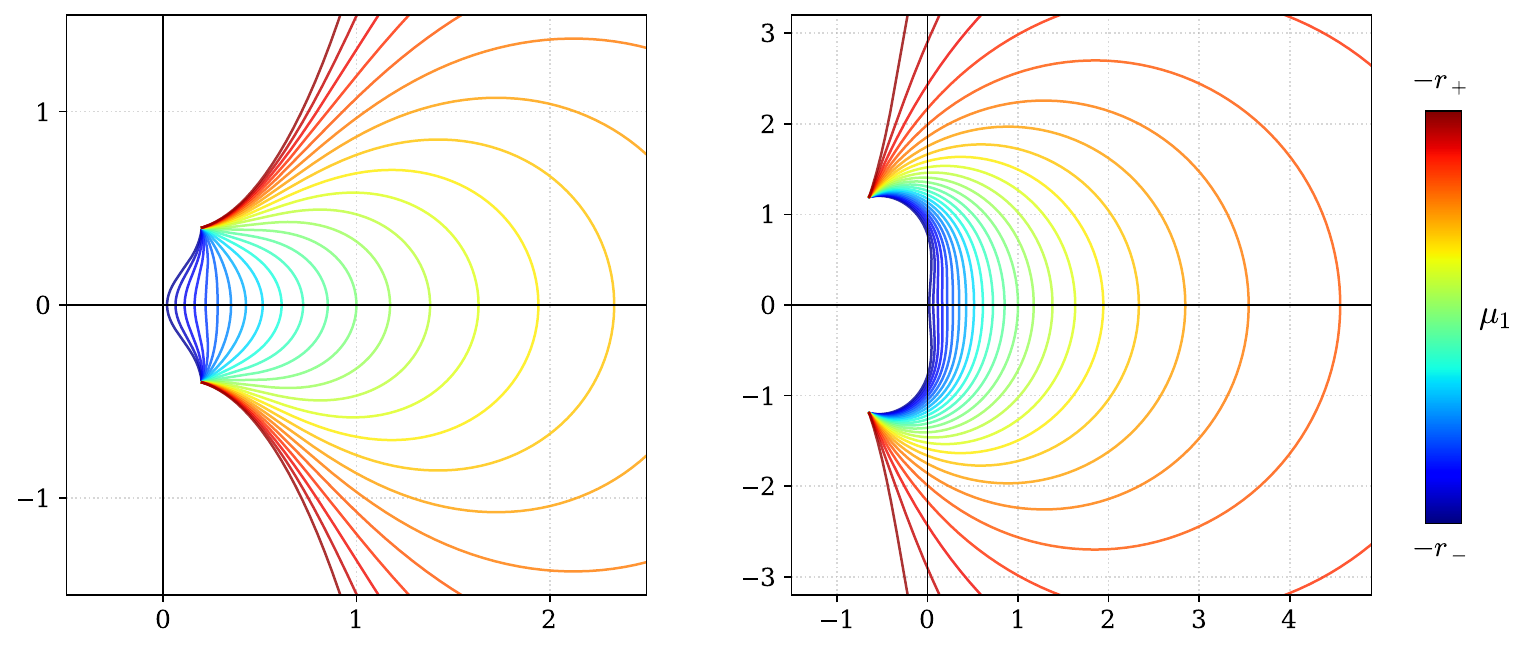}
    \caption{The curves $G(\mathbb{R})$ for $\mu_2=-1$ and $r_-=1$. The plots show the cases $r_+ = -3$ (\textit{left}) and $r_+ = -0.3$ (\textit{right}), where $\mu_1$ varies between the bounds $-r_-$ (\textit{blue}) and $-r_+$ (\textit{red}) imposed by the recurrence condition~\eqref{eq:rec}.}
    \label{fig:G(R)}
\end{figure}

Many theoretical results are stated for cases where the endpoints of $G(\mathbb R)$ meet. A first step is therefore to \textit{correct} the variation of the argument between $-\infty$ and $+\infty$. More precisely, let us introduce, for any continuous function $f:\mathbb R\to \mathbb C^*$ its  \textit{index}
$$\mathrm{Ind}(f)=\frac{1}{2i\pi}\big[\log f\big]_{-\infty}^{+\infty}=\frac{1}{2\pi}\big[\mathrm{arg}\; f\big]_{-\infty}^{+\infty}\in \mathbb R\cup\{\pm\infty\},$$
which corresponds to the variation of the argument of $f(t)$ as $t$ runs from $-\infty$
 to $+\infty$, divided by $2\pi$. One major property of the index is the following: if $f$ and $g$ are both continuous functions from $\mathbb R$ to $\mathbb C^*$, then the index of their product is the sum of the indices:
 $$\mathrm{Ind}(fg)=\mathrm{Ind}(f)+\mathrm{Ind}(g).$$
The following proposition provides the value of the index of $G$ and, in doing so, provides an insightful interpretation of Varadhan and Williams' parameter $\alpha$, as defined in \eqref{eq:alpha}.
\begin{proposition}[Index of $G$]
    $\mathrm{Ind}(G)=1-\alpha$.
\end{proposition}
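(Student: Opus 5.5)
The plan is to compute the argument variation of $G$ along $\mathbb R$ using a single continuous branch of $\arg$. Lemma~\ref{lemma:G(R)} shows that $G(\mathbb R)$ never meets the closed negative half-line. So the principal argument $\mathrm{Arg}\in(-\pi,\pi)$ should provide such a branch. Once this is justified, the index reduces to a difference of endpoint values:
$$\mathrm{Ind}(G)=\frac{1}{2\pi}\Big(\mathrm{Arg}\,G(+\infty)-\mathrm{Arg}\,G(-\infty)\Big).$$

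\textbf{Step 1: a continuous argument along the whole curve.} First I check that $G$ does not vanish. For $t\neq0$, the decomposition in the proof of point~4 of Lemma~\ref{lemma:G(R)} gives $\mathfrak{Im}\,G(t)\neq 0$. At $t=0$, point~2 gives $G(0)>0$. Point~4 itself excludes the negative real axis. Hence $G(\mathbb R)\subset\mathbb C\setminus(-\infty,0]$, where $\mathrm{Arg}$ is continuous, so $t\mapsto \mathrm{Arg}\,G(t)$ is a continuous argument on $\mathbb R$.

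This argument is also needed at $t=\pm\infty$. I must check that the limits $G(\pm\infty)$ from point~3 are not in $(-\infty,0]$, otherwise the branch could jump in the limit. This is where $\alpha\in(0,1)$ (condition~\eqref{eq:valeuralpha}, which follows from the recurrence condition~\eqref{eq:rec}) enters, through the computation in Step~2.

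\textbf{Step 2: endpoint arguments.} By point~3, $G(+\infty)=-(r_-+i)/(r_++i)$. For real $r$ one has $\mathrm{Arg}(r+i)=\frac{\pi}{2}-\arctan r$. Therefore
$$\arg\frac{r_-+i}{r_++i}=\arctan r_+-\arctan r_-=-\pi\alpha$$
by~\eqref{eq:alphabis}. Multiplying by $-1$ adds $\pi$, so one argument of $G(+\infty)$ is $\pi(1-\alpha)$. Since $\alpha\in(0,1)$, this value lies in $(0,\pi)$. It is therefore the principal argument, and $G(+\infty)\notin(-\infty,0]$.

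By point~1 (hermitian symmetry), $G(-\infty)=\overline{G(+\infty)}$, so $\mathrm{Arg}\,G(-\infty)=-\pi(1-\alpha)$.

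\textbf{Step 3: conclusion.} Combining the two steps,
$$\mathrm{Ind}(G)=\frac{\pi(1-\alpha)-(-\pi(1-\alpha))}{2\pi}=1-\alpha.$$

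The only delicate point is Step~1's claim that the principal branch remains continuous up to and including $t=\pm\infty$. This is exactly why $\alpha\in(0,1)$ is essential. If $\alpha$ were outside this range, the endpoint arguments would fall outside $(-\pi,\pi)$ or onto $\pm\pi$, and the computation would break down.
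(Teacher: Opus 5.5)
Your proof is correct and follows the same route as the paper. It uses Lemma~\ref{lemma:G(R)} to obtain a continuous argument (the principal one, since $G(\mathbb R)$ avoids $(-\infty,0]$), then reduces the index to the difference of the endpoint arguments of $G(\pm\infty)=-\frac{r_-\pm i}{r_+\pm i}$, computed via $\arg(r+i)=\frac{\pi}{2}-\arctan r$. Your explicit check that $\mathrm{Arg}\,G(+\infty)=\pi(1-\alpha)\in(0,\pi)$, which keeps the principal branch continuous up to $t=\pm\infty$, is a detail the paper leaves implicit and is a welcome addition.
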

Note that since $\alpha\in(0,1)$, we have $\mathrm{Ind}(G)\in(0,1)$, which is therefore never an integer.
\begin{proof} According to Lemma~\ref{lemma:G(R)}, $G(t)\notin \mathbb R_{-}$ for all $t\in\mathbb R$, hence there exists a continuous determination of its argument. Consequently, the variation of argument reduces to the difference of the arguments at its endpoints, the values of which are given by the same Lemma:
\begin{equation*}
    2\pi\mathrm{Ind}(G)=\big[\mathrm{arg}\;G\big]_{-\infty}^{+\infty}=\mathrm{arg}\big(G(+\infty)\big)-\mathrm{arg}\big(G(-\infty)\big)=2\mathrm{arg}\left(-\frac{r_-+i}{r_++i}\right).
\end{equation*}
Now, basic trigonometry leads to
\begin{align*}
\mathrm{arg}\left(-\frac{r_-+i}{r_++i}\right)&=\pi+\mathrm{arg}(r_-+i)-\mathrm{arg}(r_++i)\\
&=\pi+\left(\frac{\pi}{2}-\arctan(r_-)\right)-\left(\frac{\pi}{2}-\arctan(r_+)\right)\\
&=\pi+\arctan(r_+)-\arctan(r_-).
\end{align*}
Comparing the resulting expression of $\mathrm{Ind}(G)$ with the value of $\alpha$ given by~\eqref{eq:alphabis} yields the desired result.
\end{proof}

\begin{figure}[t]
    \centering
    \includegraphics[width=\linewidth]{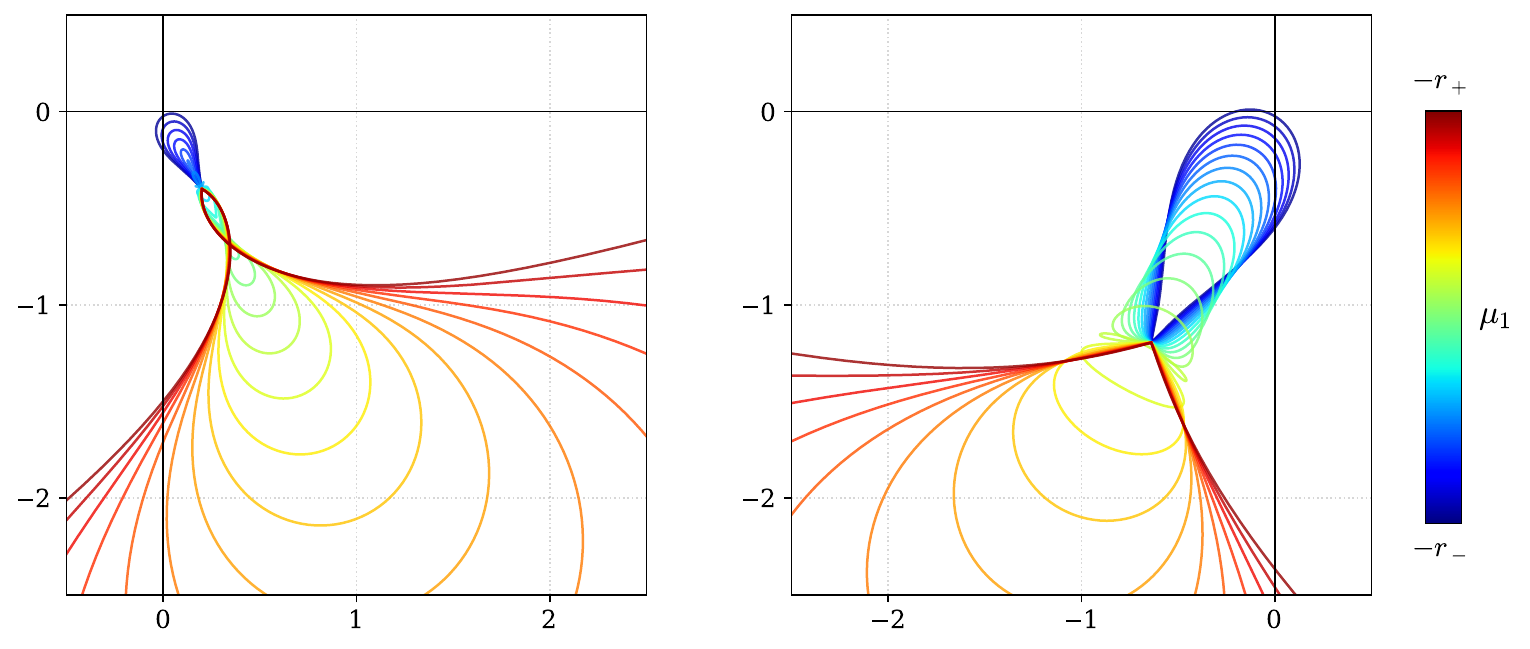}
    \caption{The corrected curves $\widetilde{G}(\mathbb{R})$ defined by~\eqref{eq:G_tilde}, using the same parameters and color coding as in Figure~\ref{fig:G(R)}.}
    \label{fig:Galpha(R)}
\end{figure}

Let $\varepsilon_\alpha$ be the \textit{correction factor}, given by the unique continuous determination of the function 
$$t \mapsto \left(\frac{t-i}{t+i}\right)^{\alpha-1}$$ 
on $\mathbb{R}$ that satisfies $\lim_{t \to -\infty} \varepsilon_\alpha(t) = 1$. Note that this choice is arbitrary, as one could equivalently replace $i$ with $z_0$ in the numerator, and $-i$ with $\overline{z}_0$ in the denominator, for any $z_0$ in the upper half-plane. The sole purpose here is to obtain a function with a constant modulus, whose total argument variation as $t$ ranges over $\mathbb{R}$ is exactly $2\pi(\alpha-1)$.

\begin{remark}[Trigonometric representation] To be completely explicit, one can express this determination using standard trigonometric functions. For $t \in \mathbb R$, the ratio $(t-i)/(t+i)$ traces the unit circle in the complex plane. The unique continuous argument $\theta(t)$ of this ratio that satisfies the limit condition is exactly given by $\theta(t) = 2\arctan(t) + \pi$. Therefore, the function $\varepsilon_\alpha$ can be equivalently defined by the closed-form formula:
\begin{equation}\label{eq:eps_alpha_explicit}
        \varepsilon_\alpha(t) = \exp\Big(i(\alpha-1)\big(2\arctan(t) + \pi\big)\Big).
    \end{equation}
    In particular, $\lim_{t\to+\infty}\varepsilon_\alpha(t)=e^{
2i\pi(\alpha-1)}$.
\end{remark}

We also define the \textit{corrected} version of $G$ to be
\begin{equation}\label{eq:G_tilde}
    \widetilde{G}(t)=\varepsilon_\alpha(t)G(t).
\end{equation}

\begin{proposition}[$\log\widetilde{G}$]\label{prop:logGtilde} There exists a continuous determination $\log \widetilde{G}$ of the logarithm of $\widetilde{G}$, and $\log \widetilde{G}(-\infty)=\log\widetilde{G}(+\infty)$.
\end{proposition}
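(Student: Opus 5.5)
Both claims follow from the way $\widetilde G$ is built: we construct $\log\widetilde G$ as the sum of two continuous logarithms and then compare its values at the two ends of $\mathbb R$.

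\textbf{Step 1: a continuous $\log G$.} By Lemma~\ref{lemma:G(R)}, $G$ is continuous on $\mathbb R$, including at $t=0$ after extension, and $G(\mathbb R)\cap\mathbb R_-=\emptyset$. In particular $G$ never vanishes. So we may use the principal branch $\mathrm{Log}$ of the logarithm on $\mathbb C\setminus\mathbb R_-$ and set $\log G:=\mathrm{Log}\circ G$, which is continuous on $\mathbb R$. The limits $G(\pm\infty)=-\frac{r_-\pm i}{r_+\pm i}$ exist, are nonzero, and are conjugate. Hence $\log G(\pm\infty)=\log|G(+\infty)|\pm i\arg G(+\infty)$, with $\arg$ the principal argument. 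The proof of the Index proposition gives the value $\arg G(+\infty)=\pi+\arctan(r_+)-\arctan(r_-)=\pi(1-\alpha)$.

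We must make sure this is the principal value, i.e. that it lies in $(-\pi,\pi)$. This holds because $\alpha\in(0,1)$ by \eqref{eq:valeuralpha}, so $\pi(1-\alpha)\in(0,\pi)$. Consequently
$$\log G(+\infty)-\log G(-\infty)=2i\pi(1-\alpha).$$

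\textbf{Step 2: a continuous $\log\varepsilon_\alpha$.} From the explicit formula \eqref{eq:eps_alpha_explicit}, take
$$\log\varepsilon_\alpha(t):=i(\alpha-1)\big(2\arctan t+\pi\big),$$
which is continuous on $\mathbb R$. Its limits are $0$ at $-\infty$ and $2i\pi(\alpha-1)$ at $+\infty$.

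\textbf{Step 3: conclusion.} Define $\log\widetilde G:=\log\varepsilon_\alpha+\log G$. It is continuous, and its exponential is $\varepsilon_\alpha G=\widetilde G$, so it is a continuous determination of the logarithm of $\widetilde G$. Its limits at $\pm\infty$ exist, and by Steps 1 and 2 their difference is
$$2i\pi(1-\alpha)+2i\pi(\alpha-1)=0,$$
so $\log\widetilde G(-\infty)=\log\widetilde G(+\infty)$.

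The main care point is the sign and branch bookkeeping in Step 1: the argument of $G(+\infty)$ must be shown to be the principal one, which is exactly where $\alpha\in(0,1)$ is used.
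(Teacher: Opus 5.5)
Your proof is correct and takes essentially the same route as the paper. The paper uses index additivity, $\mathrm{Ind}(\widetilde G)=\mathrm{Ind}(\varepsilon_\alpha)+\mathrm{Ind}(G)=-(1-\alpha)+(1-\alpha)=0$, and your explicit construction $\log\widetilde G=\log\varepsilon_\alpha+\mathrm{Log}\circ G$, with endpoint differences $2i\pi(\alpha-1)$ and $2i\pi(1-\alpha)$, is an unpacked version of that same computation. Yours is slightly more careful in two respects: you check that $\pi(1-\alpha)\in(0,\pi)$, so the principal branch is continuous up to $G(\pm\infty)$, and you obtain equality of the endpoint limits of the logarithm directly rather than via the paper's detour through $\widetilde G(-\infty)=\widetilde G(+\infty)$.
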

\begin{proof}
    By construction, $\mathrm{Ind}(\varepsilon_\alpha) = (1-\alpha)\cdot\mathrm{Ind}(\varepsilon_{0}) = -(1-\alpha)$. Therefore,
    $$\mathrm{Ind}(\widetilde{G}) = \mathrm{Ind}(\varepsilon_\alpha) + \mathrm{Ind}(G) = -(1-\alpha) + (1-\alpha) = 0,$$
    where we used the fact that $\mathrm{Ind}(fg) = \mathrm{Ind}(f) + \mathrm{Ind}(g)$. This vanishing index guarantees the existence of a continuous determination of $\log \widetilde{G}$. Moreover, the zero index implies that $\widetilde{G}(-\infty)$ and $\widetilde{G}(+\infty)$ share the same argument. Since $|\varepsilon_\alpha| \equiv 1$, they also share the same modulus. We conclude that $\widetilde{G}(-\infty) = \widetilde{G}(+\infty)$, and consequently, $\log \widetilde{G}(-\infty) = \log \widetilde{G}(+\infty)$.
\end{proof}

Now, we can formulate a new Riemann boundary value problem, which will be continuous on $\overline{\mathbb R}$. Set 
$$\psi_\pm(z):=(z\pm i)^{1-\alpha}\varphi_\pm(z),$$
once again for all $z\in\mathbb C$ such that the right-hand side is well-defined.

\begin{proposition}[Continuous BVP]\label{prop:bvp_cont} The functions $\psi_+$ and $\psi_-$ are respectively analytic in the upper and the lower half-plane, and satisfy the boundary condition
\begin{equation}\label{eq:bvp_cont}
    \psi_+(t)=\widetilde{G}(t)\psi_-(t),
\end{equation}
where $\widetilde{G}$ is given by~\eqref{eq:G_tilde}.
\end{proposition}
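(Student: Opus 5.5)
The proof has two parts. First, fix determinations of the powers $(z\pm i)^{1-\alpha}$ that are analytic in the required half-planes. Then check that, on the real line, the quotient of these two determinations equals exactly the correction factor $\varepsilon_\alpha$ of the previous remark. The relation~\eqref{eq:bvp_cont} then follows from Proposition~\ref{prop:bvp_disc} by multiplication.

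\emph{Analyticity.} The function $z+i$ vanishes only at $-i$. We define $(z+i)^{1-\alpha}=\exp\big((1-\alpha)\log(z+i)\big)$ with the branch cut $-i-i\mathbb R_+$ (downwards from $-i$) and $\arg(z+i)\in(-\pi/2,3\pi/2)$. This power is analytic on $\mathbb C\setminus(-i-i\mathbb R_+)$, which contains the closed upper half-plane.

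Symmetrically, $z-i$ vanishes only at $i$. We take the cut $i+i\mathbb R_+$ (upwards from $i$) and $\arg(z-i)\in(\pi/2,5\pi/2)$. This power is analytic on a neighbourhood of the closed lower half-plane.

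Since $\varphi_+$ (resp. $\varphi_-$) is analytic in the upper (resp. lower) half-plane and continuous up to $\mathbb R$ by Proposition~\ref{prop:bvp_disc}, the products $\psi_\pm$ inherit analyticity in their respective half-planes and continuity up to the boundary.

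\emph{Boundary relation.} For $t\in\mathbb R$, Proposition~\ref{prop:bvp_disc} gives
$$\psi_+(t)=(t+i)^{1-\alpha}G(t)\varphi_-(t)=\frac{(t+i)^{1-\alpha}}{(t-i)^{1-\alpha}}\,G(t)\,\psi_-(t).$$
It therefore suffices to show that $(t+i)^{1-\alpha}/(t-i)^{1-\alpha}=\varepsilon_\alpha(t)$ for our chosen branches. On the real line, the chosen arguments are continuous and explicit:
\begin{itemize}
\item $\arg(t+i)=\pi/2-\arctan t$, which runs from $\pi$ to $0$;
\item $\arg(t-i)=3\pi/2+\arctan t$, which runs from $\pi$ to $2\pi$, consistent with the range $(\pi/2,5\pi/2)$.
\end{itemize}
Since $|t+i|=|t-i|$, the quotient is
$$\exp\Big(i(1-\alpha)\big(\arg(t+i)-\arg(t-i)\big)\Big)=\exp\Big(i(1-\alpha)\big(-\pi-2\arctan t\big)\Big)=\exp\Big(i(\alpha-1)\big(2\arctan t+\pi\big)\Big).$$
This is exactly~\eqref{eq:eps_alpha_explicit}. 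Hence $\psi_+=\varepsilon_\alpha G\,\psi_-=\widetilde G\,\psi_-$ on $\mathbb R$, including at $t=0$ by the continuous extension of Lemma~\ref{lemma:G(R)}.

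\emph{Main obstacle.} The only delicate point is the bookkeeping of branches. A naive choice, such as principal-type branches with $\arg(t-i)\in(-\pi,0)$, produces the quotient $e^{2i\pi(1-\alpha)}\varepsilon_\alpha$. This differs from $\varepsilon_\alpha$ by a nontrivial constant, because $\alpha\notin\mathbb Z$. One must therefore choose the determination of $(z-i)^{1-\alpha}$, or absorb the constant into its definition, so that the normalization $\varepsilon_\alpha(-\infty)=1$ is matched. This is what the choice above does: both arguments tend to $\pi$ as $t\to-\infty$, so the quotient tends to $1$ there.
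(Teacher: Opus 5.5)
Your proof is correct and takes the same route as the paper's: multiply $\varphi_+=G\varphi_-$ by $(t+i)^{1-\alpha}$, write $\varphi_-$ in terms of $\psi_-$, and identify the quotient $(t+i)^{1-\alpha}/(t-i)^{1-\alpha}$ with $\varepsilon_\alpha$. Your explicit branch choice $\arg(z-i)\in(\pi/2,5\pi/2)$ is more careful than the paper's proof, which takes the principal branch of $(z+i)^{1-\alpha}$, treats $(z-i)^{1-\alpha}$ by ``a symmetric argument'', and does not track determinations; read literally, that choice produces exactly the extra unimodular constant $e^{2i\pi(1-\alpha)}$ you pointed out.
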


\begin{proof}
    Recall from Proposition~\ref{prop:bvp_disc} that $\varphi_+$ is analytic in the upper half-plane. The principal branch of $t \mapsto (t+i)^{1-\alpha}$ is also analytic in the upper half-plane, as its branch point $t=-i$ lies strictly below the real axis. Therefore, their product $\psi_+$ is analytic in the upper half-plane. The analyticity of $\psi_-$ in the lower half-plane follows by a symmetric argument. For $t\in\mathbb R$, the boundary condition is a straightforward rewriting of the original boundary condition $\varphi_+(t)=G(t)\varphi_-(t)$ of Proposition~\ref{prop:bvp_disc}:
    \begin{equation*}
        \psi_+(t) = \varphi_+(t)(t+i)^{1-\alpha} = G(t)\varphi_-(t)(t+i)^{1-\alpha} = G(t)\left(\frac{t-i}{t+i}\right)^{\alpha-1}\psi_-(t).
    \end{equation*}
    Recalling the definitions of $\varepsilon_\alpha(t)$ and $\widetilde{G}(t)$, this immediately simplifies to~\eqref{eq:bvp_cont}, completing the proof.
\end{proof}

Finally, the following lemma provides a strictly sublinear growth bound for $\psi_\pm$ within its domain of convergence, a result that will be needed in the proof of Theorem~\ref{thm:laplace}.

\begin{lemma}[Growth of $\psi_\pm$]\label{lemma:growth} As $|z|\to+\infty$ within their domains of convergence, $\psi_\pm(z)=o(|z|)$.
\end{lemma}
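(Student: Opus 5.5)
The plan is to bound the Laplace transforms $\varphi_\pm$ on their closed half-planes and multiply by the explicit factor $(z\pm i)^{1-\alpha}$. We have $\varphi_+(z)=\phi_+(iz)=\int_{\mathbb R_+}e^{izu}\,\mathrm d\boldsymbol{\pi}_+(u)$. For $\mathfrak{Im}(z)\geqslant 0$ and $u\geqslant 0$, $|e^{izu}|=e^{-u\,\mathfrak{Im}(z)}\leqslant 1$. Hence $|\varphi_+(z)|\leqslant \boldsymbol{\pi}_+(\mathbb R_+)<\infty$, and this total mass is finite by the definition of $\boldsymbol{\pi}_+$: it equals $\mathbb E_{\boldsymbol{\pi}}[\ell_+(1)]$, which is finite because $\ell_++\ell_-$ is the local time at $0$ of the one-dimensional reflected Brownian motion of Remark~\ref{rem:1D}, whose stationary expected local time over $[0,1]$ is finite (explicitly $-\mu_2$, as one sees by taking expectations in the second coordinate). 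The same argument gives $|\varphi_-(z)|\leqslant \boldsymbol{\pi}_-(\mathbb R_-)$ for $\mathfrak{Im}(z)\leqslant 0$.

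It follows that $|\psi_\pm(z)|\leqslant C|z\pm i|^{1-\alpha}$ on the respective closed half-plane. Since $\alpha\in(0,1)$ by \eqref{eq:valeuralpha}, we have $1-\alpha<1$, so $|z\pm i|^{1-\alpha}=O(|z|^{1-\alpha})=o(|z|)$ as $|z|\to\infty$. This gives the claim; in fact we get the stronger bound $O(|z|^{1-\alpha})$, uniformly in the closed half-plane, boundary included.

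The only step that needs care is the finiteness of the masses of $\boldsymbol{\pi}_\pm$, which is what makes the bound uniform. To get it, I would take expectations under $\boldsymbol{\pi}$ in the second-coordinate equation of Remark~\ref{rem:1D} at $t=1$, giving $0=\mu_2+\mathbb E_{\boldsymbol{\pi}}[\ell_+(1)+\ell_-(1)]$, so the total mass is $-\mu_2$. This identity also follows from the functional equation \eqref{eq:FE} by dividing by $y$ and letting $y\to0^-$ with $x=0$, which yields $\mu_2+\sum_\pm\phi_\pm(0)=0$. The one-dimensional argument requires $Z^{(2)}$ to be integrable under stationarity, which holds because its stationary law is exponential with parameter $-2\mu_2$.
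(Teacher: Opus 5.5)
Your proof is correct and follows the paper's argument. Like the paper, you bound $|\varphi_\pm|$ on the closed half-plane by the total mass $\boldsymbol{\pi}_\pm(\mathbb R_\pm)$ and multiply by $|z\pm i|^{1-\alpha}$, using $1-\alpha<1$. Your justification that these masses are finite (total $-\mu_2$, obtained by taking stationary expectations in the second coordinate of the one-dimensional reflected motion) usefully spells out what the paper only defers to the normalization step of Theorem~\ref{thm:laplace}.
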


\begin{proof} Being Laplace transforms of finite measures, the functions $\phi_\pm$ are bounded by their value at $0$. Indeed, for any valid $x$, we have:
$$|\phi_\pm(x)|=\left|\int_{\mathbb R_\pm}e^{xu}\mathrm{d}\boldsymbol{\pi}_{\pm}(u)\right|\leqslant \int_{\mathbb R_\pm}e^{\mathfrak{Re}(x)u}\mathrm{d}\boldsymbol{\pi}_\pm(u)\leqslant \boldsymbol{\pi}_\pm \left(\mathbb R_\pm\right).$$
This total mass is finite and can be explicitly computed (see the proof of Theorem~\ref{thm:laplace}). Recalling the definition of $\psi_\pm$ we deduce the following asymptotic behavior:
$$\psi_{\pm}(z)=(z\pm i)^{1-\alpha}\phi_\pm(iz)=O(|z|^{1-\alpha})=o(|z|),$$
where the last equality follows from the recurrence condition~\eqref{eq:rec}, which enforces $\alpha\in (0,1)$.
\end{proof}

\section{Sokhotski-Plemelj formula}\label{sec:sp}

This section provides the solution to the continuous boundary value problem of Proposition~\ref{prop:bvp_cont}, which in turn yields the solution to the discontinuous stated at Proposition~\ref{prop:bvp_disc}. The strategy is to construct sectionally analytic functions on $\mathbb C\setminus\mathbb R$ using Cauchy-type integrals, whose limits from above and below satisfy the boundary condition of the BVP. We introduce the following regularity class, which ensures that the Cauchy-type integrals are well-defined and that the Sokhotski-Plemelj formulas hold.

\begin{definition}[Hölder condition $\breve{H}^\mu$]\label{def:holder} A function $f:\mathbb R\to\mathbb C$ is said to satisfy the Hölder condition $\breve{H}^\mu$ with $0<\mu\leqslant 1$ if there exists a closed interval $I\subset \mathbb R$ containing $0$ (as interior point) such that
\begin{itemize}
    \item[(i)] $f$ is $\mu$-Hölder continuous on $I$, \textit{i.e.} there exists $\kappa>0$ such that
    $$|f(s)-f(t)|\leqslant \kappa|s-t|^\mu,\quad \forall s,t\in I,$$
    \item[(ii)] $f$ is $\mu$-Hölder continuous in the neighborhood of infinity, \textit{i.e.} there exists $\kappa'>0$ such that
    $$|f(s)-f(t)|\leqslant \kappa'\left|\frac{1}{s}-\frac{1}{t}\right|^\mu,\quad \forall s,t\in\mathbb R\setminus I .$$
\end{itemize}
\end{definition}

This technical condition amounts to transforming the function $f:\mathbb R\to\mathbb C$ into a function $f^\circ$ defined on a circle (via a Möbius transformation) and requiring $f^\circ$ to be $\mu$-Hölder continuous. See~\cite[Lemma 5.1.2]{lu94} for more details on this equivalence. The following proposition is a classical result in complex analysis; we refer the reader to~\cite[\S 14.7]{gakhov}, which addresses this specific setting.

\begin{proposition}[Sokhotski-Plemelj formula]\label{prop:SP} Let $f\in \breve{H}^\mu$ for some $\mu\in (0,1]$. The function
$$F(z):=\frac{1}{2i\pi}\int_{\mathbb R}\frac{ f(\tau)}{\tau-z}\mathrm{d}\tau,\quad z\in\mathbb C\setminus \mathbb R,$$
is sectionally analytic, and admits, for all $t\in\mathbb R$, a limit from below $F_-(t)$ and a limit from above $F_+(t)$, that satisfy
\begin{equation}\label{eq:jump_cond}
    F_+(t)-F_-(t)=f(t),\quad F_+(t)+F_-(t)=\frac{1}{i\pi }\;\mathrm{p.v.}\int_{\mathbb R}\frac{f(\tau)}{\tau-t}\mathrm{d}\tau.
\end{equation}
Moreover, $F$ is bounded at infinity.
\end{proposition}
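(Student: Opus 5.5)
The plan is to reduce the statement on $\mathbb R$ to the classical Sokhotski–Plemelj theorem for a Hölder function on a closed bounded contour, namely the unit circle, using a Möbius transformation. Concretely, I will use the Cayley map $w=\frac{z-i}{z+i}$. It sends $\mathbb R$ onto $\mathbb T\setminus\{1\}$, the upper half-plane onto the unit disc $\mathbb D$, and the lower half-plane onto $\{|w|>1\}$. Set $f^\circ(w):=f(\tau)$ with $w=\frac{\tau-i}{\tau+i}$, and $f^\circ(1):=f(\pm\infty)$. By the equivalence quoted from \cite[Lemma 5.1.2]{lu94}, the condition $f\in\breve H^\mu$ says exactly that $f^\circ$ is $\mu$-Hölder on $\mathbb T$. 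This also forces the two limits $f(+\infty)$ and $f(-\infty)$ to exist and coincide, since condition (ii) makes $f$ Cauchy as $1/t\to0$ from both sides.

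Next, I will transfer the Cauchy kernel. With $\tau=i\frac{1+\omega}{1-\omega}$ and $z=i\frac{1+w}{1-w}$, a direct computation gives
$$\frac{\mathrm d\tau}{\tau-z}=\frac{\mathrm d\omega}{\omega-w}-\frac{\mathrm d\omega}{\omega-1}.$$
Hence
$$F(z)=\frac{1}{2i\pi}\int_{\mathbb T}\frac{f^\circ(\omega)}{\omega-w}\mathrm d\omega-\frac{1}{2i\pi}\int_{\mathbb T}\frac{f^\circ(\omega)}{\omega-1}\mathrm d\omega .$$
The second term must be read carefully, since the integral over $\mathbb R$ is itself only conditionally convergent, as $f$ need not decay. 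I will therefore define $F$ through symmetric truncations $\int_{-R}^R$. In the $\omega$-picture this becomes a principal value at $\omega=1$, which exists because $f^\circ$ is Hölder at $1$. So $F(z)=\Phi(w)-c$, where $\Phi$ is the circle Cauchy integral and $c$ is a finite constant.

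The classical theorem on the circle (Plemelj–Privalov) then gives everything needed. It shows that $\Phi$ is sectionally analytic with Hölder boundary values $\Phi^{\pm}$, taken from inside and from outside $\mathbb T$ respectively, satisfying $\Phi^+-\Phi^-=f^\circ$ and $\Phi^++\Phi^-=\frac{1}{i\pi}\,\mathrm{p.v.}\int_{\mathbb T}\frac{f^\circ(\omega)}{\omega-w}\,\mathrm d\omega$. Because the constant $c$ cancels in the difference, pulling back along the Cayley map gives the jump $F_+-F_-=f$. For the sum, I will pull back the circle principal value using the same kernel identity. The subtracted term $2c$ reproduces exactly the real-line principal value, with symmetric truncation at both $0$-neighbourhoods and at infinity, and this yields the second relation in~\eqref{eq:jump_cond}.

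Finally, boundedness at infinity follows because $z\to\infty$ corresponds to $w\to1$. Near that point the Hölder continuity of $f^\circ$ keeps $\Phi$ bounded: this is the standard estimate for Cauchy integrals near a contour point, with the boundary values themselves being Hölder. The main obstacle is the bookkeeping at the point $\omega=1$. One must check that the principal values at infinity on $\mathbb R$ match the circle principal value, and that the constant $c$ is finite precisely because $f(+\infty)=f(-\infty)$. This is where condition (ii) is essential; with unequal limits, as for the original $G$, $F$ would acquire a logarithmic singularity at infinity.
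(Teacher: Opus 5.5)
Your argument is correct and is essentially the route the paper points to without writing it out: the paper only cites the classical result \cite{gakhov} and the M\"obius-transform characterisation of $\breve{H}^\mu$ \cite[Lemma 5.1.2]{lu94}, and your reduction via the Cayley map and the kernel identity $\frac{\mathrm d\tau}{\tau-z}=\frac{\mathrm d\omega}{\omega-w}-\frac{\mathrm d\omega}{\omega-1}$ to the Plemelj--Privalov theorem on the unit circle is the standard proof behind that citation. Where you are more careful than the paper is in reading $\int_{\mathbb R}$ as a symmetric principal value at infinity; the excision $|\omega-1|>\varepsilon$ corresponds exactly to $|\tau|<R$, and this reading is genuinely needed because $f=\log\widetilde G$ does not decay.
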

Note that Equation~\eqref{eq:jump_cond} involves the Cauchy principal value defined, in this context, by
$$\mathrm{p.v.}\int_{\mathbb R}\frac{f(\tau)}{\tau-t}\mathrm{d}\tau :=\lim_{\substack{\varepsilon\to 0^+ \\[0.08cm] R\to+\infty}}\left\{\int_{t-R}^{t-\varepsilon}\frac{f(\tau)}{\tau-t}\mathrm{d}\tau+\int_{t+\varepsilon}^{t+R}\frac{f(\tau)}{\tau-t}\mathrm{d}\tau\right\}.$$

\begin{remark}[from multiplicative to additive]
The Sokhotski-Plemelj formula provides the solution to \textit{additive} boundary value problems. To recast our multiplicative problem into this framework, we take the logarithm of the continuous boundary value problem~\eqref{eq:bvp_cont}. This operation is well-defined thanks to Proposition~\ref{prop:logGtilde}, and yields
\begin{equation}\label{eq:additive_bvp}
    \log\psi_+(t)-\log\psi_-(t)=\log\widetilde{G}(t).
\end{equation}
The following lemma verifies that we can indeed apply the Sokhotski-Plemelj formula to $f(t):=\log\widetilde{G}(t)$.
\end{remark}

\begin{lemma}[$\widetilde{G}\in\breve{H}^1$]\label{lemma:holder} The function $\log\widetilde{G}$ satisfies the Hölder condition $\breve{H}^1$.
\end{lemma}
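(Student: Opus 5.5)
We need both conditions of Definition~\ref{def:holder} with $\mu=1$: Lipschitz continuity on a compact interval $I=[-1,1]$, and Lipschitz continuity in the variable $s=1/t$ near infinity. Since $\widetilde G$ takes values in $\mathbb C^*$ and $\log\widetilde G$ is the continuous determination from Proposition~\ref{prop:logGtilde}, locally $\log\widetilde G$ is a composition of $\widetilde G$ with a holomorphic branch of $\log$. It therefore suffices to show that $\widetilde G$ is $C^1$ on $\mathbb R$, and that $s\mapsto \widetilde G(1/s)$ extends to a $C^1$ function on a neighbourhood of $s=0$ (on both sides, matching at $0$). Then $\log\widetilde G$ inherits the same regularity, because $\widetilde G$ stays in a compact subset of $\mathbb C^*$ on each relevant compact set.

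\textbf{Step 1: $C^1$ on compacts, in particular on $I$.} The square root $\sqrt{\Delta(t)}$ with $\Delta(t)=\mu_2^2-2\mu_1 it+t^2$ is smooth on $\mathbb R$. Indeed, $\mathfrak{Re}\sqrt{\Delta}=u(t)\geqslant|\mu_2|>0$, so $\Delta(t)$ never hits the branch cut, and $\Delta(t)\neq0$. The numerator and denominator of $G$ are thus smooth. The denominator $r_+it-\mu_2-\sqrt\Delta$ vanishes only at $t=0$: its real part is $-\mu_2-u(t)<0$ for $t\ne0$, as shown in Lemma~\ref{lemma:G(R)}. The numerator also vanishes at $t=0$, so I will write
\[
r_\pm it-\mu_2-\sqrt{\Delta(t)}=t\,h_\pm(t),
\]
where $h_\pm$ is analytic near $0$ with $h_\pm(0)=i(r_\pm-\mu_1/\mu_2)\neq0$ by~\eqref{eq:rec}. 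Since $-\mu_2-\sqrt{\Delta}$ is real-analytic in $t$ and vanishes at $0$, the factorization holds and $G=-h_-/h_+$ is real-analytic near $0$. The factor $\varepsilon_\alpha$ is smooth by~\eqref{eq:eps_alpha_explicit}. Hence $\widetilde G\in C^1(\mathbb R)$, and it is Lipschitz on $I$.

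\textbf{Step 2: behaviour at infinity (main obstacle).} Put $s=1/t$ for $|t|\ge1$. I will compute
\[
\sqrt{\Delta(t)}=|t|\sqrt{1-2\mu_1 i s+\mu_2^2s^2},
\]
with the principal root. This choice is consistent with $\mathfrak{Re}\sqrt\Delta>0$, and the inner root is analytic in $s$ near $0$. Dividing numerator and denominator of $G$ by $|t|$ gives
\[
G(t)=-\frac{r_-i\,\mathrm{sgn}(t)-\mu_2|s|-\sqrt{1-2\mu_1is+\mu_2^2s^2}}{r_+i\,\mathrm{sgn}(t)-\mu_2|s|-\sqrt{1-2\mu_1is+\mu_2^2s^2}}.
\]
On each side $s>0$ and $s<0$ this is an analytic function of $s$ up to $s=0$, with nonvanishing denominator $\pm r_+i-1$. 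So $G(1/s)$ is Lipschitz on each of $[-1,0)$ and $(0,1]$ separately, with the jump $G(-\infty)\neq G(+\infty)$. The delicate point is that $\varepsilon_\alpha$ corrects exactly this jump. Using $2\arctan t=\pm\pi-2\arctan s$ for $\pm t>0$, we get $\varepsilon_\alpha(1/s)=c_\pm e^{-2i(\alpha-1)\arctan s}$, which is Lipschitz on each side. Proposition~\ref{prop:logGtilde} shows that the one-sided limits of $\widetilde G(1/s)$ at $s=0^\pm$ coincide. A function that is Lipschitz on $[-1,0)$ and on $(0,1]$, and continuous at $0$, is Lipschitz on $[-1,1]$: for $s<0<s'$, insert the point $0$ and use the triangle inequality. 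This gives condition (ii) for $\widetilde G$.

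\textbf{Step 3: pass to the logarithm.} On $I$ and on $\mathbb R\setminus I$, $|\widetilde G|=|G|$ is bounded above and below by positive constants. This follows from continuity on $\overline{\mathbb R}$, from the finite nonzero limits in Lemma~\ref{lemma:G(R)}, and from $G(t)\neq0$, since the numerator vanishes only at $t=0$ by the same real-part argument. The continuous branch $\log\widetilde G$ is locally $\log$ composed with $\widetilde G$, and $\log$ is Lipschitz on compact subsets of $\mathbb C^*$ for any fixed branch. Together with continuity of $\log\widetilde G$ at $s=0$ (Proposition~\ref{prop:logGtilde}) and the same splitting trick, the Lipschitz bounds transfer to $\log\widetilde G$ in both the $t$-variable on $I$ and the $1/t$-variable off $I$. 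This yields $\log\widetilde G\in\breve H^1$.
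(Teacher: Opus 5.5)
Your proof is correct and follows the same route as the paper's. Both arguments establish Lipschitz bounds for $\widetilde G$ on a compact interval and Lipschitz regularity in the variable $1/t$ near $\pm\infty$, where both sides share the limit $-\frac{r_--i}{r_+-i}$; both then transfer this to $\log\widetilde G$ because $\widetilde G$ is bounded away from $0$.

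The paper only asserts bounded derivatives and quotes an expansion $\widetilde G(t)=-\frac{r_--i}{r_+-i}(1-C_\pm/t)+O(t^{-2})$. You additionally justify three points the paper leaves implicit: the removable singularity at $t=0$ (via the factorization $t\,h_\pm(t)$), the analyticity in $s=1/t$ on each side (which is what actually controls the remainder), and the pairs $s<0<s'$ (handled by inserting $s=0$ and using Proposition~\ref{prop:logGtilde}).
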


\begin{proof} Note that $\widetilde{G}=\varepsilon_\alpha G$ is Lipschitz on $\mathbb R$, since both factors are bounded and have bounded derivatives; therefore, it is $1$-Hölder continuous. Let us show that it is also Hölder continuous at infinity. As $t\to  \pm\infty$,
$$\widetilde{G}(t)=-\frac{r_--i}{r_+-i}\left(1-\frac{C_\pm}{t}\right)+O\left(t^{-2}\right),$$
where 
$$C_\pm:=i\left(2(\alpha-1)+\frac{(r_+-r_-)(\mu_2\mp i\mu_1)}{(ir_-\mp 1)(ir_+\mp 1)}\right).$$
These asymptotic expansions directly imply the second condition of Definition~\ref{def:holder}. Finally, according to Lemma~\ref{lemma:G(R)}, $G$ is bounded away from $0$, and so is $\widetilde{G}$ (because $|\varepsilon_{\alpha}|=1$). Because the logarithm is Lipschitz continuous on any domain bounded away from zero, $\log \widetilde{G}$ also satisfies $\breve{H}^\mu$.
\end{proof}

The fact that the derivative of $\widetilde{G}$ is bounded is illustrated in Figure~\ref{fig:G'(R)}.

\begin{figure}[t]
    \centering
    \includegraphics[width=\linewidth]{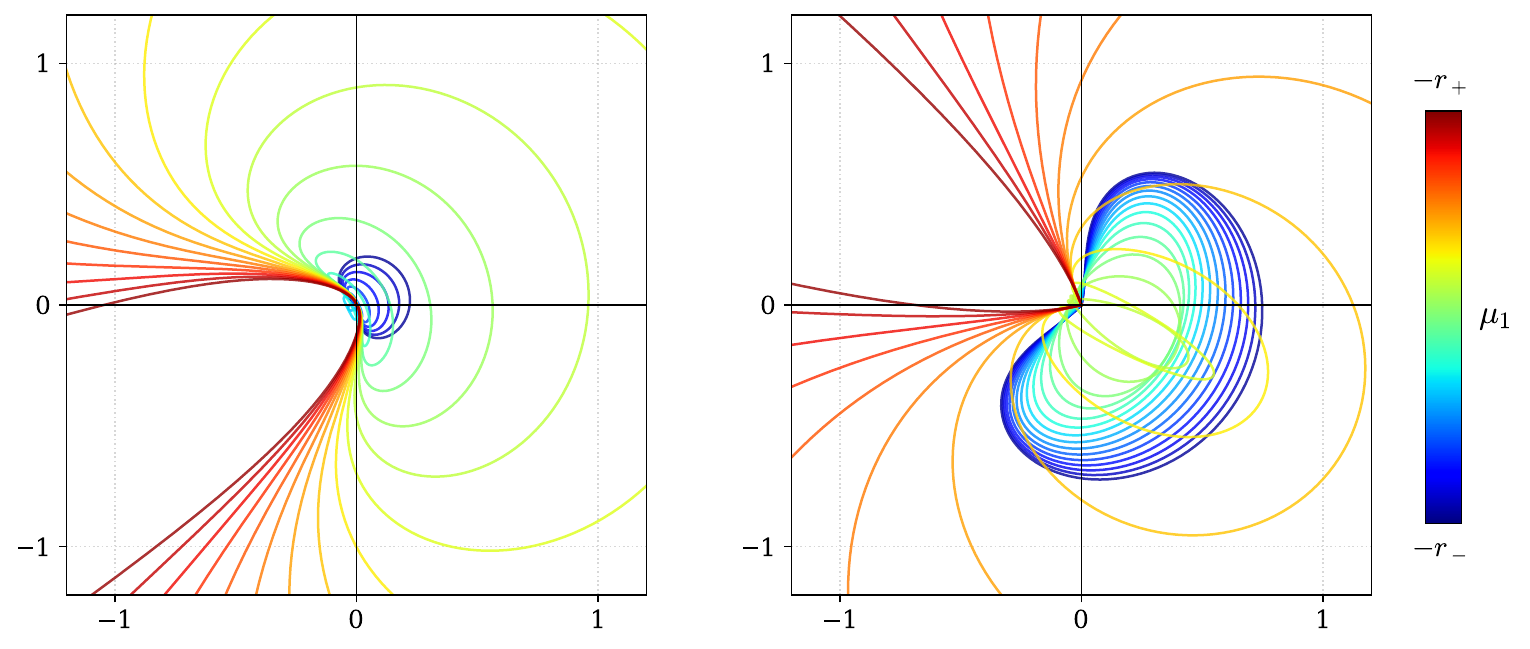}
    \caption{The curves $\widetilde{G}'(\mathbb R)$, using the same parameters and color coding as in Figures~\ref{fig:G(R)} and~\ref{fig:Galpha(R)}. Boundedness of the derivative ensures that $\widetilde{G}$ is Lipschitz continuous, \textit{i.e.}, $1$-Hölder continuous on $\mathbb R$.}
    \label{fig:G'(R)}
\end{figure}

\begin{proposition}[Solutions for the continuous BVP]\label{prop:bvp_sol} Let $\Psi$ be a solution to the continuous BVP of Proposition~\ref{prop:bvp_cont} satisfying $\Psi(z)=o(|z|)$ at infinity. Then, for $z\in\mathbb C\setminus \mathbb R$,
$$\Psi(z)\propto \exp\left(\frac{1}{2i\pi}\int_\mathbb R \frac{\log\widetilde{G}(\tau)}{\tau-z}\mathrm{d}\tau\right).$$
\end{proposition}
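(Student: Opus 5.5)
We will use the canonical solution built from the Sokhotski-Plemelj formula and show that any other admissible solution differs from it by a constant multiple. Set
$$X(z):=\exp\big(F(z)\big),\qquad F(z):=\frac{1}{2i\pi}\int_{\mathbb R}\frac{\log\widetilde G(\tau)}{\tau-z}\mathrm{d}\tau,\quad z\in\mathbb C\setminus\mathbb R .$$
By Lemma~\ref{lemma:holder}, $\log\widetilde G\in\breve H^1$, so Proposition~\ref{prop:SP} applies. Hence $F$ is sectionally analytic, has boundary values $F_\pm$ satisfying $F_+-F_-=\log\widetilde G$, and is bounded at infinity. Consequently $X$ is sectionally analytic and nonvanishing on $\mathbb C\setminus\mathbb R$. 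Its boundary values $X_\pm=e^{F_\pm}$ satisfy $X_+(t)=\widetilde G(t)X_-(t)$. Finally, $X$ and $1/X$ are both bounded at infinity and near $\mathbb R$, since $F_\pm$ are continuous on $\overline{\mathbb R}$ by Hölder continuity.

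Now let $\Psi$ be a solution with $\Psi=o(|z|)$. We form the quotient $\Omega:=\Psi/X$ on $\mathbb C\setminus\mathbb R$. It is analytic in each half-plane because $X$ never vanishes. On the real line,
$$\Omega_+(t)=\frac{\Psi_+(t)}{X_+(t)}=\frac{\widetilde G(t)\Psi_-(t)}{\widetilde G(t)X_-(t)}=\Omega_-(t).$$
The two boundary values coincide, so a Painlevé / Morera-type argument shows that $\Omega$ extends to an entire function. This argument needs the boundary values to be taken continuously, or at least in a sense strong enough to glue. For $\psi_\pm$ this is guaranteed because $\varphi_\pm$ are Laplace transforms of finite measures, continuous up to $\mathbb R$. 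For a general $\Psi$ we take this continuity as part of what "solution" means.

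Next comes growth. Since $1/X$ is bounded, $\Omega(z)=o(|z|)$ as $|z|\to\infty$. Cauchy's estimates on circles of radius $R$ give $|\Omega'(z_0)|\le \max_{|z|=R}|\Omega|/R\to0$, so $\Omega'\equiv0$. By this extended Liouville theorem $\Omega$ is a constant $c$, and therefore $\Psi=cX$, which is the claimed proportionality.

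The main obstacle is the gluing step. One must justify that the boundary values of $\Psi$ exist and that $\Omega$ is continuous across $\mathbb R$, including at $t=\pm\infty$ and near $t=0$. One must also make sure that the $o(|z|)$ bound holds uniformly, including as $z$ approaches the real axis at large $|z|$.

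First, we would use continuity of $\widetilde G$ and of $F_\pm$ on $\overline{\mathbb R}$, which Proposition~\ref{prop:logGtilde} and Proposition~\ref{prop:SP} provide. This gives continuity of $X_\pm$ up to the boundary, uniformly. Then we would apply Morera's theorem on rectangles crossing $\mathbb R$, approximating by horizontal lines $\mathfrak{Im} z=\pm\varepsilon$.

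If uniformity at infinity is delicate, a Phragmén–Lindelöf argument in each half-plane would give the bound $o(|z|)$ on the whole of $\mathbb C$. Such an argument is available because $\Omega$ is bounded by $o(|z|)$ on rays where the hypothesis applies.
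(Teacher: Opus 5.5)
Your proof is correct and follows the paper's argument: you build the canonical solution $X=e^{F}$ from the Sokhotski--Plemelj formula, show that $\Psi/X$ has no jump across $\mathbb R$, extend it to an entire function by Morera, and conclude with a Liouville-type argument from the $o(|z|)$ growth. You are also slightly more careful than the paper on two points. You make explicit that continuity of the boundary values is needed for the gluing step, and you use boundedness of $1/X=e^{-F}$ (which follows from boundedness of $F$), which is what the growth step actually requires, whereas the paper only states that $\exp F$ is bounded.
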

\begin{proof}
   According to Lemma~\ref{lemma:holder}, one can apply the Sokhotski-Plemelj formula (Proposition~\ref{prop:SP}) to define the following sectionally analytic function 
   $$F(z):=\frac{1}{2i\pi}\int_{\mathbb R}\frac{\log\widetilde{G}(\tau)}{\tau-z}\mathrm{d}\tau,$$
   whose limits from above and below satisfy, for any $t\in\mathbb R$, 
   $$F_+(t)-F_-(t)=\log\widetilde{G}(t).$$
   This implies that $\exp(F)$ is a non-vanishing solution to the continuous BVP of Proposition~\ref{prop:bvp_cont}. Consider the ratio $R:=\Psi/\exp(F)$. The function $R$ is analytic in $\mathbb C\setminus \mathbb R$, and for every $t\in\mathbb R$,
   $$R_+(t)=\frac{\Psi_+(t)}{\exp F_+(t)}=\frac{\widetilde{G}(t)\Psi_-(t)}{\widetilde{G}(t)\exp F_-(t)}=\frac{\Psi_-(t)}{\exp F_-(t)}=R_-(t).$$
    Thus, $R$ is continuous across the real axis. By Morera's theorem, $R$ extends to an entire function on $\mathbb C$. We know from Proposition~\ref{prop:SP} that $\exp F$ is bounded at infinity, and by hypothesis that $\Psi(z)=o(|z|)$, hence $R(z)=o(|z|)$. By Liouville's theorem, $R$ must be constant, which yields the stated result.
\end{proof}

\begin{theorem}[Explicit Laplace transforms]\label{thm:laplace} 
The Laplace transforms $\phi_\pm(x)$ are given by the following explicit formulas:
\begin{enumerate}
    \item[(a)] For $\pm \mathfrak{Re}(x) < 0$,
    \begin{equation*}
        \phi_\pm(x) = \frac{\Lambda}{(x \mp 1)^{1-\alpha}} \exp\left(\frac{1}{2i\pi} \int_{\mathbb{R}} \frac{\log\widetilde{G}(\tau)}{\tau+ix} \mathrm{d}\tau \right).
    \end{equation*}
    \item[(b)] On the imaginary axis, for $x=it$ with $t \in \mathbb{R}$,
    \begin{equation*}
        \phi_\pm(it) = \frac{\Lambda}{(it \mp 1)^{1-\alpha}} \widetilde{G}(t)^{\pm 1/2} \exp\left(\frac{1}{2i\pi} \,\mathrm{p.v.} \int_{\mathbb{R}} \frac{\log\widetilde{G}(\tau)}{\tau-t} \mathrm{d}\tau \right),
    \end{equation*}
    where the principal value integral is defined as in Proposition~\ref{prop:SP}.
\end{enumerate}
Here, the common normalization constant $\Lambda$ is uniquely determined by the relation
\begin{equation}\label{eq:normalization}
    \phi_\pm(0) = \pm \frac{\mu_1 - \mu_2 r_\mp}{r_- - r_+}.
\end{equation}
\end{theorem}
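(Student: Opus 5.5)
The plan is to apply Proposition~\ref{prop:bvp_sol} to the pair $\Psi:=(\psi_+,\psi_-)$ built in Proposition~\ref{prop:bvp_cont}. By Lemma~\ref{lemma:growth}, $\psi_\pm(z)=o(|z|)$ in their half-planes, and by Proposition~\ref{prop:bvp_cont} they satisfy $\psi_+=\widetilde G\psi_-$ on $\mathbb R$. Hence there is a constant $c$ with $\psi_\pm(z)=c\exp F(z)$ for $\pm\mathfrak{Im}(z)>0$, where $F$ is the Cauchy integral of $\log\widetilde G$. Strictly speaking, the proof of Proposition~\ref{prop:bvp_sol} needs $\psi_\pm$ to have boundary values on $\mathbb R$ in a sense compatible with the Morera argument. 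I would justify this by noting that $\phi_\pm$ are Laplace transforms of finite measures, hence continuous up to the imaginary axis, so $\psi_\pm$ are continuous up to $\mathbb R$.

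Undoing the substitutions, I write $\varphi_\pm(z)=(z\pm i)^{\alpha-1}\psi_\pm(z)$ and set $z=-ix$, equivalently $x=iz$, so that $\phi_\pm(x)=\varphi_\pm(-ix)$. The condition $\pm\mathfrak{Im}(z)>0$ becomes $\pm\mathfrak{Re}(x)<0$. In the prefactor, $z\pm i=-i(x\mp1)$, so $(z\pm i)^{\alpha-1}=(-i)^{\alpha-1}(x\mp1)^{\alpha-1}$, where the branch constants must be checked to coincide on each half-plane. These constants are absorbed into $\Lambda$. The Cauchy kernel becomes $\tau-z=\tau+ix$, which gives (a) with a single constant $\Lambda$ common to both signs.

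The main obstacle is showing that the same $\Lambda$ works for both $\phi_+$ and $\phi_-$. The constant $c$ is common because $\Psi$ is a single sectionally analytic function. The phases $(-i)^{\alpha-1}$ coming from the principal branches of $(z+i)^{1-\alpha}$ on the upper half-plane and $(z-i)^{1-\alpha}$ on the lower half-plane must therefore be compared carefully. I expect them to match once one uses that $\arg(z\pm i)\in(0,\pi)$, respectively $(-\pi,0)$, together with the normalization $\varepsilon_\alpha(-\infty)=1$. If they do not match, the discrepancy is a unimodular constant, and it would have to be tracked against the choice of $\log\widetilde G$.

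For (b), I let $z\to t\in\mathbb R$ from above and from below and use the Sokhotski--Plemelj relations~\eqref{eq:jump_cond}. These give $F_\pm(t)=\pm\frac12\log\widetilde G(t)+\frac{1}{2i\pi}\,\mathrm{p.v.}\!\int_{\mathbb R}\frac{\log\widetilde G(\tau)}{\tau-t}\mathrm{d}\tau$, so $\exp F_\pm(t)=\widetilde G(t)^{\pm1/2}\exp(\cdots)$. Continuity of $\phi_\pm$ up to $i\mathbb R$ then yields (b).

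Finally, I determine $\Lambda$ through the value $\phi_\pm(0)$, that is, the masses $\boldsymbol\pi_\pm(\mathbb R_\pm)$. For this I differentiate the functional equation~\eqref{eq:FE}. At $x=0$, $\phi$ equals $1$ and $K(0,y)=\frac12y^2+\mu_2y$. Dividing by $y$ and letting $y\to0$ gives $\mu_2+\sum_\pm\phi_\pm(0)=0$. Differentiating in $x$ at $(0,0)$, or equivalently setting $y=0$, dividing by $x$ and letting $x\to0$, gives $\mu_1+\sum_\pm r_\pm\phi_\pm(0)=0$. Solving this $2\times2$ system, which is non-degenerate since $r_+\neq r_-$, yields $\phi_\pm(0)=\pm(\mu_1-\mu_2r_\mp)/(r_--r_+)$, which is~\eqref{eq:normalization}. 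Evaluating (b) at $t=0$ then fixes $\Lambda$ uniquely, since the exponential factor does not vanish.
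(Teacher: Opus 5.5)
Your proposal follows the paper's proof essentially step for step: you apply Proposition~\ref{prop:bvp_sol} to the sectionally analytic function built from $\psi_\pm$, undo the substitution $z=-ix$, read off (b) from the Sokhotski--Plemelj jump relations, and fix $\Lambda$ through the same $2\times 2$ linear system extracted from~\eqref{eq:FE}. Your remarks on boundary continuity are details the paper leaves implicit. The branch check you flag does go through: take principal $\arg(z\pm i)$, the determination $\arg(x-1)\in(\pi/2,3\pi/2)$ on $\mathfrak{Re}(x)<0$, and principal $\arg(x+1)$ on $\mathfrak{Re}(x)>0$; then $(z\pm i)^{\alpha-1}=(-i)^{\alpha-1}(x\mp 1)^{\alpha-1}$ on both sides, so $\Lambda=c\,(-i)^{\alpha-1}$ is common and your fallback about a residual unimodular discrepancy is not needed.
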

\begin{proof} (a) The function $\Psi:\mathbb C\setminus \mathbb R\to\mathbb C$ that coincides with $\psi_+$ on $\{\mathfrak{Im}(z)>0\}$ and with $\psi_-$ on $\{\mathfrak{Im}(z)<0\}$ is a solution to the continuous BVP of Proposition~\ref{prop:bvp_cont}, and satisfies $\Psi(z)=o(|z|)$ as $|z|\to+\infty$ (see Lemma~\ref{lemma:growth}). We can therefore apply Proposition~\ref{prop:bvp_sol}:
    $$\psi_\pm(z)\propto \exp\left(\frac{1}{2i\pi}\int_\mathbb R \frac{\log \widetilde{G}(\tau)}{\tau-z}\mathrm{d}\tau\right).$$
    Thus, for any $x$ in the respective domains of convergence,
    \begin{align*}
        \phi_\pm(x) = \frac{\psi_\pm(-ix)}{(-ix \pm i)^{1-\alpha}}\propto \frac{1}{(x\mp 1)^{1-\alpha}}\exp\left(\frac{1}{2i\pi}\int_{\mathbb R}\frac{\log\widetilde{G}(\tau)}{\tau+ix}\mathrm{d}\tau\right).
    \end{align*}
    (b) Applying the Sokhotski-Plemelj formula (Proposition~\ref{prop:SP}) to the Cauchy integral of $f(\tau)=\log\widetilde{G}(\tau)$, the limit yields the principal value integral and a boundary term $\pm\frac{1}{2}\log\widetilde{G}(t)$. Exponentiating this sum produces the factor $\widetilde{G}(t)^{\pm 1/2}$, which leads to the stated formula.\\
    To determine the normalization constant $\Lambda$, one can evaluate the functional equation~\eqref{eq:FE} at $x=0$ and divide by $y\ne 0$ to obtain
    $$0=\left(\frac{y}{2}+\mu_2\right)\phi(0,y)+\phi_-(0)+\phi_+(0).$$
    Letting $y\to 0$ leads to the first linear equation $\phi_-(0)+\phi_+(0)=-\mu_2$ (since $\phi(0,0)=\boldsymbol{\pi}(\mathbb H_+)=1$). Similarly, evaluating~\eqref{eq:FE} at $y=0$, dividing by $x\ne 0$ and letting $x\to 0$ gives the second relation $r_-\phi_-(0)+r_+\phi_+(0)=-\mu_1$. The resulting linear system 
    $$\left\{\begin{array}{ccccc}
         r_-\phi_-(0)&+&r_+\phi_+(0)&=&-\mu_1  \\
         \phi_-(0)&+&\phi_+(0)&=&-\mu_2
    \end{array}\right.$$
    has determinant $R_-^\intercal R_+=r_--r_+$, which is non-zero according to the recurrence relation~\eqref{eq:rec}; it therefore admits the stated solution as its unique solution. Note that the same recurrence condition ensures that $\phi_\pm(0)>0$.
\end{proof}

In certain special cases, the integral expression of Theorem~\ref{thm:laplace} reduces to a simple closed-form formula, which can be explicitly inverted to recover the density of the invariant measure. The following proposition details one such case.

\begin{proposition}[A symmetric case]\label{prop:ex} If $(\mu_1,\mu_2,r_-,r_+)=(0,-1,1,-1)$, then the density $\pi$ of the invariant measure is given by
$$\pi(u,v)=\frac{1}{\sqrt{\pi}}\frac{\sqrt{\sqrt{u^2+v^2}+v}}{\sqrt{u^2+v^2}}\exp\left(-\sqrt{u^2+v^2}-v\right),$$
or equivalently, in polar coordinates
$$\pi(r\cos(\theta),r\sin(\theta))=\sqrt{\frac{2}{\pi r}}\cos\left(\frac{\theta}{2}-\frac{\pi}{4}\right)\exp\left(-2r\cos^2\left(\frac{\theta}{2}-\frac{\pi}{4}\right)\right).$$
\end{proposition}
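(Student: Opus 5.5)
The plan is to verify a candidate rather than evaluate the Cauchy integral of Theorem~\ref{thm:laplace} directly, using uniqueness to conclude. With $(\mu_1,\mu_2,r_-,r_+)=(0,-1,1,-1)$, formula~\eqref{eq:alphabis} gives $\alpha=1/2$. The recurrence condition~\eqref{eq:rec} holds, since $-1<0<1$. The normalization~\eqref{eq:normalization} gives $\phi_\pm(0)=1/2$.

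\textbf{Step 1: the coefficient $G$.} Here $\sqrt{\Delta}=\sqrt{1+t^2}=\sqrt{1+it}\,\sqrt{1-it}$, with principal branches. The numerator of~\eqref{eq:G} is
$$1+it-\sqrt{1+t^2}=\sqrt{1+it}\bigl(\sqrt{1+it}-\sqrt{1-it}\bigr),$$
and the denominator is
$$1-it-\sqrt{1+t^2}=\sqrt{1-it}\bigl(\sqrt{1-it}-\sqrt{1+it}\bigr).$$
Cancelling the common factor should give
$$G(t)=\left(\frac{1+it}{1-it}\right)^{1/2}.$$

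\textbf{Step 2: the boundary transforms.} Step 1 suggests the factorization $\varphi_+(t)=\tfrac12(1-it)^{-1/2}$ and $\varphi_-(t)=\tfrac12(1+it)^{-1/2}$, that is, $\phi_\pm(x)=\tfrac12(1\mp x)^{-1/2}$. Each is analytic in the correct half-plane and satisfies~\eqref{eq:BVP}. Each takes the value $1/2$ at $0$. The associated $\psi_\pm(z)=(z\pm i)^{1/2}\varphi_\pm(z)$ are bounded, hence $o(|z|)$. By Proposition~\ref{prop:bvp_sol}, together with the normalization, these are the true $\phi_\pm$. Inverting the Laplace transform with $\int_0^\infty u^{-1/2}e^{-(1-x)u}\,\mathrm du=\sqrt{\pi}(1-x)^{-1/2}$ gives the boundary densities
$$\pi_\pm(u)=\frac{1}{2\sqrt\pi}\,|u|^{-1/2}e^{-|u|}.$$
This agrees with the Corollary $\pi_\pm=\tfrac12\pi(\cdot,0)$, since the claimed density restricts to $\pi(u,0)=\pi^{-1/2}|u|^{-1/2}e^{-|u|}$ on the axis.

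\textbf{Step 3: the bivariate transform.} The functional equation~\eqref{eq:FE} now gives
$$\phi(x,y)=-\frac{(x+y)\,\tfrac12(1-x)^{-1/2}+(y-x)\,\tfrac12(1+x)^{-1/2}}{\tfrac12(x^2+y^2)-y}.$$
It remains to show that this is the Laplace transform of the proposed $\pi$. This is the main obstacle.

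I would attack it through the ADN system of Remark~\ref{rem:ADN}, to avoid inverting a bivariate transform. First check that the candidate, written in polar form as $r^{-1/2}\cos(\theta/2-\pi/4)\exp(-r-r\sin\theta)$, satisfies $\tfrac12\Delta\pi+\partial_v\pi=0$ in the interior. The natural tool is parabolic coordinates $\sqrt{r+v}$, $\sqrt{r-v}$, in which $\sqrt{r+v}/r$ and $e^{-r-v}$ separate nicely. Next check the boundary conditions $(\partial_v\mp\partial_u+2)\pi(u,0)=0$ for $\pm u>0$; this is a direct one-variable computation on each half-axis. Then compute the total mass in polar coordinates and check that it equals $1$.

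To close the argument, I would rather not rely on uniqueness for the ADN system. Instead I would directly compute the Laplace transform of the candidate in parabolic coordinates $u=ab$, $v=(a^2-b^2)/2$. In these coordinates $\sqrt{r+v}=a$ and $r+v=a^2$, so the integral becomes a Gaussian-type integral in $(a,b)$. I would then check that it coincides with the expression for $\phi(x,y)$ above. Injectivity of the Laplace transform then identifies the density.

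Finally, the polar form follows from the identities
$$r+v=r(1+\sin\theta)=2r\cos^2\!\left(\tfrac\theta2-\tfrac\pi4\right),\qquad \sqrt{r+v}=\sqrt{2r}\,\cos\!\left(\tfrac\theta2-\tfrac\pi4\right).$$
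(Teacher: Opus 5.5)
Your strategy differs from the paper's and is workable: factorize $G$, identify $\phi_\pm$ by Liouville, then verify the candidate by computing its Laplace transform forward. However, Step 3 as written contains an error that would make your final comparison fail.

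Since $r_+=-1$ and $r_-=1$, one has $k_+(x,y)=y-x$ and $k_-(x,y)=x+y$. So $y-x$ must multiply $\phi_+(x)=\tfrac12(1-x)^{-1/2}$, but you paired them the other way round. Your expression cannot be the Laplace transform of a probability measure. The correct numerator vanishes at the root $y=Y^-(x)=1-\sqrt{1-x^2}$ of $K(x,\cdot)$; that is exactly how the BVP was derived. Yours does not vanish there. Hence, for $x=it$ with $t\neq 0$, your expression has a pole at $y=1-\sqrt{1+t^2}<0$, where $|\phi|\leqslant 1$ must hold.

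The same swap appears in your ADN boundary operators. The correct conditions are $(\partial_v\pm\partial_u+2)\pi(u,0)=0$ for $\pm u>0$, and the candidate satisfies these, not yours. With the correct pairing,
\[ \phi(x,y)=-\frac{1}{K(x,y)}\left(\frac{y-x}{2\sqrt{1-x}}+\frac{x+y}{2\sqrt{1+x}}\right)=\left(\frac{1}{\sqrt{1-x}}+\frac{1}{\sqrt{1+x}}\right)\frac{1}{1+\sqrt{1-x^2}-y}. \]

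The decisive computation is also only announced, and it is not a plain Gaussian integral. The map $(a,b)\mapsto(ab,(a^2-b^2)/2)$ sends the sector $\{a>0,\ |b|<a\}$ bijectively onto the open upper half-plane, with Jacobian $a^2+b^2=2r$. On that sector the candidate becomes $\frac{2}{\sqrt{\pi}}\,a e^{-a^2}\,\mathrm{d}a\,\mathrm{d}b$, so you are integrating over a wedge.

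It closes easily, though. Substituting $b=as$ and integrating out $a$ gives
\[ \iint_{\mathbb H_+}e^{xu+yv}\pi(u,v)\,\mathrm{d}u\,\mathrm{d}v=\frac12\int_{-1}^{1}\frac{\mathrm{d}s}{Q(s)^{3/2}},\qquad Q(s)=1-xs-\frac{y}{2}(1-s^2), \]
with $\mathfrak{Re}\,Q\geqslant 1$ for $x\in i\mathbb R$ and $\mathfrak{Re}(y)\leqslant 0$, so principal branches are legitimate. Since $\frac{\mathrm{d}}{\mathrm{d}s}\frac{ys-x}{\sqrt{Q(s)}}=-\frac{K(x,y)}{Q(s)^{3/2}}$ and $Q(\pm 1)=1\mp x$, this integral is exactly the corrected expression above, term by term. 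Injectivity then concludes.

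Compared with the paper: the paper derives $\pi$ constructively. It inverts $\phi$ in $y$ (a simple pole at $1+\sqrt{1-x^2}$), then Fourier-inverts in $t$ via $t=\sinh\xi$ and $\omega=e^{\xi/2}$, leading to Gaussian-type integrals in $\omega$, with some simplifications omitted. Your route needs the answer in advance, which the statement supplies. In exchange it replaces the inversion by a short forward computation with an elementary antiderivative. Likewise, your Step 2 (explicit factorization $G=\sqrt{1+it}/\sqrt{1-it}$ plus Liouville) is more self-contained than the paper's specialization of Theorem~\ref{thm:laplace} to a constant $\widetilde G$.
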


\begin{proof}
    With this choice of parameters, the function $\widetilde{G}(t)$ evaluates to a constant (in fact, it is the only choice that leads to such simplification). Hence, Theorem~\ref{thm:laplace} yields 
    \begin{equation}\label{eq:lateral_laplace}
        \phi_\pm(x)=\frac{1}{2\sqrt{1\mp x}}.
    \end{equation}
    We recognize here the Laplace transforms of a Gamma distribution, and the corresponding lateral densities are given explicitly by
    $$\pi_\pm(u)=\frac{1}{2\sqrt{\pi}}\frac{\exp(-|u|)}{\sqrt{|u|}}.$$
    Injecting~\eqref{eq:lateral_laplace} into the functional equation~\eqref{eq:FE} leads to the following expression for the bivariate Laplace transform:
    \begin{equation}\label{eq:laplace_symmetric}
    \phi(x,y)=\left(\frac{1}{\sqrt{1-x}}+\frac{1}{\sqrt{1+x}}\right)\frac{1}{1+\sqrt{1-x^2}-y}.
    \end{equation}
    First, we invert $\phi(x,y)$ with respect to the second variable, by noticing that, as a function of $y$, it is of the form $c/(y_0-y)$. Here $y_0= y_0(x)=1+\sqrt{1-x^2}$. Hence, we obtain the following \textit{partial inverse} identity
    \begin{equation*}
        \int_{-\infty}^{+\infty}\pi(u,v)e^{xu}\mathrm{d}u=\left(\frac{1}{\sqrt{1-x}}+\frac{1}{\sqrt{1+x}}\right)\exp\left(-v\left(1+\sqrt{1-x^2}\right)\right)
    \end{equation*}
    For $x=it$, this reduces our problem to the Fourier inversion of this quantity with respect to $t$:
    $$\pi(u,v)=\frac{e^{-v}}{2\pi}\int_{-\infty}^{+\infty}\left(\frac{1}{\sqrt{1-it}}+\frac{1}{\sqrt{1+it}}\right)\exp\left(-v\sqrt{1+t^2}-itu\right)\mathrm{d}t.$$
    Successively applying the substitutions $t=\sinh \xi$ and $\omega=\exp(\xi/2)$, the integral in the above expression reduces down to
    \begin{align*}
        &\quad\!\quad\int_{-\infty}^{+\infty}\left(\frac{1}{\sqrt{1-it}}+\frac{1}{\sqrt{1+it}}\right)\exp\left(-v\sqrt{1+t^2}-itu\right)\mathrm{d}t\\
        &=2\int_{-\infty}^{+\infty}\cosh\left(\frac{\xi}{2}\right)\exp\left(-v\cosh \xi -iu\sinh \xi\right)\mathrm{d}\xi\\
        &=2\int_0^{+\infty}\exp\left(-a\omega^2-\frac{\overline{a}}{\omega^2}\right)\mathrm{d}\omega+2\int_0^{+\infty}\frac{1}{\omega^2}\exp\left(-a\omega^2-\frac{\overline{a}}{\omega^2}\right)\mathrm{d}\omega\\
        &=2\int_0^{+\infty}\exp\left(-a\omega^2-\frac{\overline{a}}{\omega^2}\right)\mathrm{d}\omega+2\int_0^{+\infty}\exp\left(-\overline{a}\omega^2-\frac{a}{\omega^2}\right)\mathrm{d}\omega,
    \end{align*}
    where $a=\frac12(v+iu)$. Some standard hyperbolic trigonometric and algebraic simplifications are omitted. Here, we recognize Gaussian-type integrals, which yields
    $$\pi(u,v)=\frac{\exp(-v-2|a|)}{2\sqrt{\pi}}\left(\frac{1}{\sqrt{a}}+\frac{1}{\sqrt{\overline{a}}}\right).$$
    Minor algebraic cleanups (\textit{e.g.}, $2|a|=\sqrt{u^2+v^2}$) conclude the proof.
\end{proof}

\begin{remark} The formula from Proposition~\ref{prop:ex} can be compared with already known results:
\begin{itemize}
    \item We know from Remark~\ref{rem:1D} that $Z_t^{(2)}$ behaves like a one-dimensional Brownian motion reflected at $0$ with drift $\mu_2$. Integrating $\pi(u,v)$ with respect to $u$ gives 
\begin{align*}
    \int_{-\infty}^{+\infty}\pi(u,v)\mathrm{d}u&=\frac{2e^{-v}}{\sqrt{\pi}}\int_0^{+\infty}\frac{\sqrt{\sqrt{u^2+v^2}+v}}{\sqrt{u^2+v^2}}\exp\left(-\sqrt{u^2+v^2}\right)\mathrm{d}u\\
    &=\frac{2e^{-v}}{\sqrt{\pi}}\int_v^{+\infty}\frac{e^{-t}}{\sqrt{t-v}}\mathrm{d}t=\frac{2e^{-2v}}{\sqrt{\pi}}\Gamma(1/2)=2e^{-2v},
\end{align*}
which is precisely the density of the invariant measure of a one-dimensional Brownian motion, with drift $\mu_2=-1$, reflected at $0$ (\textit{i.e.}, the exponential distribution with parameter $-2\mu_2=2$).
\item The authors of \cite{BoMe} introduce two additional parameters $\alpha_1$ and $\alpha_2$. Applying their definitions to Proposition~\ref{prop:ex} yields $\alpha_1=\alpha_2=0$. Note that this example is discussed in Section 8.3.1 of \cite{BoMe} for a wedge opening $\beta<\pi$.
\end{itemize}
\end{remark}

\section{Tauberian theorems and asymptotic analysis}\label{sec:asymp}

From the kernel functional equation of Proposition~\ref{prop:FE} and the explicit formulas of Theorem~\ref{thm:laplace}, the behavior at the origin and the tail asymptotics of the boundary densities are derived via Tauberian theorems, as stated in the following two theorems. For compactness, we write $f\asymp g$ when both functions exhibit the same asymptotic behavior up to positive constants.

\subsection{Behaviour at the origin}

\begin{theorem}[Behavior at the origin]\label{thm:asymp} The stationary density $\pi$ exhibits a radial asymptotic behavior given by
$$\pi(r\cos\theta,r\sin \theta)\asymp \sin\left(\delta_+-\alpha\theta\right)r^{-\alpha},\quad \text{ as }r\to0^+,$$
where $\delta_+$ is the angle between the horizontal axis and the reflection vector $R_+$ (see Figure~\ref{fig:model}).
\end{theorem}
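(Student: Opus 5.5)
Our plan combines two ingredients. From Theorem~\ref{thm:laplace} we extract the decay of the lateral Laplace transforms at infinity, which gives the exponent of the boundary densities near $0$ through a Tauberian argument. From the ADN formulation of Remark~\ref{rem:ADN} we obtain the angular profile: the leading singular term of $\pi$ at the corner must be a local solution of the Laplace equation with the oblique boundary conditions.

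\textbf{Step 1 (exponent on the boundary).} From Theorem~\ref{thm:laplace}(a), $\phi_+(x)=\Lambda (x-1)^{\alpha-1}\exp(F(ix))$, where $F$ is the Cauchy integral of $\log\widetilde G$. Since $\log\widetilde G\in\breve H^1$ (Lemma~\ref{lemma:holder}) and $\log\widetilde G(\pm\infty)$ agree, $F(z)\to 0$ as $|z|\to\infty$ in both half-planes, with no logarithmic correction. Hence $\phi_+(x)\sim \Lambda e^{i\pi(\alpha-1)\cdot(\text{branch})}\,(-x)^{\alpha-1}$ as $x\to-\infty$ along the real axis, and likewise for $\phi_-$ as $x\to+\infty$. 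Because $\boldsymbol\pi_\pm$ are positive measures with densities $\pi_\pm$, the Karamata/Hardy--Littlewood Tauberian theorem (the density being eventually monotone, or using a Laplace transform regularly varying with index $\alpha-1$) gives
\[
\pi_\pm(u)\sim c_\pm |u|^{-\alpha}, \qquad u\to 0^\pm,
\]
with explicit $c_\pm>0$. By the Corollary, $\pi(u,0)=2\pi_\pm(u)$, so $\pi(r,0)\asymp r^{-\alpha}$ and $\pi(-r,0)\asymp r^{-\alpha}$. This already fixes the radial exponent $-\alpha$.

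\textbf{Step 2 (angular profile).} Near the origin the drift terms in $\mathcal L^*$ and the term $-2\mu_2\pi$ in the boundary condition are of lower order under the scaling $r\to\lambda r$. The leading term $r^{-\alpha}\Theta(\theta)$ must therefore be harmonic and satisfy $(\partial_v-r_\pm\partial_u)\pi=0$ on $\theta=0$ and $\theta=\pi$. Harmonicity forces $\Theta(\theta)=A\cos(\alpha\theta)+B\sin(\alpha\theta)$, equivalently $\pi\approx\mathfrak{Re}(c\,z^{-\alpha})$. The oblique derivative $\partial_v-r_\pm\partial_u$ applied to $\mathfrak{Re}(c z^{-\alpha})$ is $\mathfrak{Re}\big(c(-\alpha)z^{-\alpha-1}(i-r_\pm)\big)$. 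Vanishing on $\theta=0$ gives $\arg c+\arg(i-r_+)\equiv\pi/2 \pmod \pi$. Using $\arg(i-r_+)$, related to $\delta_+=\pi/2+\arctan(-r_+)$, this yields $\Theta(\theta)\propto\sin(\delta_+-\alpha\theta)$. The condition at $\theta=\pi$ is then automatically satisfied precisely because $\alpha=(\delta_++\delta_--\pi)/\pi$, which is the standard Varadhan--Williams eigenvalue condition, so this check is routine. Positivity of $\Theta$ on $[0,\pi]$ follows from $\delta_+\in(0,\pi)$ and $\delta_+-\alpha\pi=\pi-\delta_->0$. The resulting boundary values $\sin\delta_+$ and $\sin\delta_-$ are consistent with $c_\pm$ from Step 1, which provides a cross-check.

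\textbf{Step 3 and main obstacle.} The hard part is justifying that $\pi$ genuinely admits the expansion $\pi=r^{-\alpha}\Theta(\theta)+o(r^{-\alpha})$ uniformly in $\theta$, rather than merely having the boundary behavior. We would first appeal to the corner regularity theory for elliptic ADN systems (Dauge, Grisvard, cited in Remark~\ref{rem:ADN}). It states that a solution in a weighted space near a corner expands in singular functions $r^{\lambda}\Theta_\lambda(\theta)$, where the $\lambda$ are eigenvalues of the operator pencil computed in Step 2: $\lambda=-\alpha+k$ for $k\in\mathbb Z$, plus lower-order corrections coming from the drift. Integrability of $\pi$, together with the blow-up of order exactly $r^{-\alpha}$ at the boundary found in Step 1, selects the exponent $\lambda=-\alpha$. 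Any more singular term, namely $\lambda=-\alpha-1$, is excluded since $\alpha+1>1$ would contradict the local integrability of $\pi_\pm$. Moreover, the leading coefficient is nonzero because $c_\pm\neq0$. Should the general theory be awkward to apply, a fallback is to write $\pi=\mathfrak{Re}(c z^{-\alpha})+w$ and use a maximum-principle/barrier comparison for $w$ in a small half-disc, with the boundary data controlled by Step 1.
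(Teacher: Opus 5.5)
Your proposal is correct in substance, with corner asymptotics used as a black box exactly as the paper does. The angular computation matches the paper's, but you obtain the exponent $-\alpha$ by a different route.

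\textbf{The paper's route.} It never isolates the boundary. It rescales the kernel equation to get $\phi(\lambda x,\lambda y)\asymp\lambda^{\alpha-2}$ and applies a multidimensional Tauberian theorem to get $\pi\asymp c(\theta)r^{-\alpha}$ directly in the interior. Kondratiev/Costabel--Dauge is then used only to say that this already-known leading term solves the principal-part system, and only the condition at $\theta=0$ is imposed.

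\textbf{Your route.} You use one-dimensional Karamata theory on $\phi_\pm$ to read off the blow-up of $\pi(u,0)$. You compute the full spectrum of the pencil, $-\alpha+\mathbb Z$; you omitted $\lambda=0$ (constants), which is harmless since $0>-\alpha$. You note that every singular function $\mathfrak{Re}(cz^{\lambda})\propto r^{\lambda}\sin(\delta_++\lambda\theta)$ has nonzero trace on both half-axes. The boundary data then select $\lambda=-\alpha$ with a nonzero coefficient. To exclude $\lambda=-\alpha-1$ you rightly use the one-dimensional integrability of $\pi_\pm$ rather than that of $\pi$, since $r^{-\alpha-1}$ is integrable near the corner when $\alpha<1$.

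\textbf{What each buys.} Your route avoids the least standard step of the paper, a Tauberian theorem for a transform that is Fourier in $u$ and Laplace in $v$. It also makes explicit several checks the paper skips. The condition at $\theta=\pi$ is exactly where the definition of $\alpha$ enters. The profile is positive on $[0,\pi]$. The ratio $\sin\delta_+/\sin\delta_-=\sqrt{1+r_-^2}/\sqrt{1+r_+^2}$ equals $|G(\pm\infty)|$, which is the ratio of the constants of $\phi_\pm$ at infinity. The price is that you rely on the existence of the full Kondratiev expansion, not merely on the fact that a known leading term solves the reduced system.

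\textbf{Two small repairs.}
First, the Cauchy integral does not tend to $0$. The common value $L=\log\widetilde G(\pm\infty)$ is generally nonzero (it already is in Proposition~\ref{prop:ex}, where $\widetilde G\equiv -i$). Hence $F(z)\to\pm L/2$ in the upper and lower half-planes; only finiteness of the limit matters for your argument.

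Second, Karamata's theorem only gives $\boldsymbol\pi_+([0,u])\sim Cu^{1-\alpha}$. Passing to $\pi_+(u)\sim c_+u^{-\alpha}$ needs monotonicity near $0$, which you have not shown; regular variation of the transform controls only the distribution function. Simply reverse the order. Once corner theory gives $\pi(u,0)=a_\pm|u|^{\lambda_0}(1+o(1))$ with $\lambda_0>-1$, the Abelian direction yields $\phi_\pm$ of order $|x|^{-\lambda_0-1}$. Matching this with $|x|^{\alpha-1}$ forces $\lambda_0=-\alpha$ and $a_\pm\neq0$, with no Tauberian hypothesis needed.
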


\begin{remark} It is worth noting that since $\alpha\in (0,1)$, the singularity described in Theorem~\ref{thm:asymp} is in fact integrable. Furthermore, we exactly recover the asymptotics of the invariant measure at the origin in the \textit{driftless} case, see~\cite{Williams_1985b} (note that with the author's notations, $\theta_1:=\delta_+-\frac\pi2$, which instead yields a coefficient of  $\cos(\alpha\theta-\theta_1)$). In particular, neither the exponent of the radius $r$ nor the angular function of $\theta$ depends on the drift $\mu$.
\end{remark}

\begin{proof}
    We first analyze the asymptotic behavior of the Laplace transform $\phi_+(x)$, using the explicit formula given in Theorem~\ref{thm:laplace}~(a).
According to Proposition~\ref{prop:SP}, the Cauchy-type integral is bounded as $|x| \to \infty$ within the domain of analyticity. Thus, the exponential factor converges to a finite constant $C$. The prefactor in the expression of $\phi_+(x)$ behaves as
\begin{equation*}
    \frac{\kappa}{(x-1)^{1-\alpha}} \underset{x \to -\infty}{\sim} \kappa |x|^{\alpha-1}.
\end{equation*}
Combining these two estimates,
\begin{equation*}\label{eq:phi_asymp_inf}
    \phi_+(x) \sim C\kappa|x|^{\alpha-1},\quad \text{as }|x|\to  \infty
\end{equation*}
A symmetric argument applied to $\phi_-(x)$ yields an analogous result. Let us now introduce a scaling factor $\lambda$. For fixed values of $x$ and $y$, as $\lambda \to \infty$,
$$K(\lambda x,\lambda  y)\asymp \lambda^2,\quad k_\pm(\lambda x,\lambda y)\asymp \lambda,\quad \phi_\pm(\lambda x)\asymp\lambda^{\alpha-1}.$$
Hence, from the functional equation~\eqref{eq:FE}, one obtains
$$\phi(\lambda x,\lambda y)=\frac{-1}{K(\lambda x,\lambda y)}\sum_{\pm}k_\pm(\lambda x,\lambda y)\phi_\pm(\lambda x)\asymp \lambda^{\alpha-2}.$$
Applying a two-dimensional version of the Tauberian theorem (see \cite{tauberian}, especially the section ``\textit{Many-Dimensional Tauberian Theorems}'') yields, for any fixed value of $\theta$,
$$\pi(r\cos\theta,r\sin\theta)\asymp r^{-\alpha},\quad \text{as }r\to 0^+,$$
\textit{i.e.}, there exists a function $c:[0,\pi]\to \mathbb R_+$ such that $\pi(r\cos\theta,r\sin\theta)\asymp c(\theta)r^{-\alpha}$, that we must now determine. To do so, recall that according to Remark~\ref{rem:ADN}, the density $\pi$ satisfies system~\eqref{eq:EBVP_system}. The asymptotics of solutions to such systems are well understood, see in particular~\cite{Kondratiev_1967,Costabel_Dauge_1993}, whose results we will use as a black box. The main idea is that, having identified the leading term $$\pi_0(r\cos\theta,r\sin\theta):=c(\theta)r^{-\alpha}$$ in the expansion of $\pi$ near the origin, the function $c(\theta)$ is automatically analytic (see \cite[Thm. 1.2]{Kondratiev_1967} or the introduction of~\cite{Costabel_Dauge_1993}), and $\pi_0$ satisfies a system similar to~\eqref{eq:EBVP_system}, obtained by keeping only the highest-order terms of each differential operator:
\begin{equation}\label{eq:EBVP_system_alt}
\left\{\begin{array}{ll}
\Delta\pi_0(u,v)=0 & \text{for all } (u,v)\in \overset{\circ}{\mathbb{H}}_+, \\[8pt]
\left(\displaystyle\frac{\partial}{\partial v}-r_\pm\frac{\partial}{\partial u}\right)\pi_0(u,0)=0&\text{for all }\pm u>0.
\end{array}\right.
\end{equation}
For more details, see Section 1.d, ``\textit{Expansion of the boundary value system},'' of~\cite{Costabel_Dauge_1993} (in particular, Equation (1.12) in their Conclusion 1.1). In polar coordinates, the equation $\Delta\pi_0(u,v)=0$ can be written as
$$0=\left(\frac{\partial^2}{\partial r^2}+\frac{1}{r}\frac{\partial}{\partial r}+\frac{1}{r^2}\frac{\partial^2}{\partial \theta^2}\right)\big(c(\theta)r^{-\alpha}\big)=r^{-\alpha-2}\big(\alpha^2 c(\theta)+c''(\theta)\big).$$
This reduces to first solving $\alpha^2c+c''=0$, for which a basis of solutions is given by $\cos(\alpha \theta)$ and $\sin(\alpha\theta)$. There thus exist two constants $c_1$ and $c_2$ such that $c(\theta)=c_1\cos(\alpha\theta)+c_2\sin(\alpha\theta)$. Let us determine an equation relating $c_1$ and $c_2$ using the boundary condition on $\mathbb R_+\times \{0\}$ from~\eqref{eq:EBVP_system_alt}, which becomes, in polar coordinates:
$$0=\left(\frac{1}{r}\frac{\partial}{\partial \theta}-r_+\frac{\partial}{\partial r}\right) \pi_0(r\cos\theta,r\sin\theta)\Big|_{\theta=0}=\alpha r^{-\alpha-1}\big(c_2+r_+c_1\big),$$
yielding $c_2=-r_+c_1$, and finally
$$c(\theta)\propto \cos(\alpha\theta)-r_+\sin(\alpha\theta)\propto \sin(\alpha\theta-\delta_+),$$
since $\tan(\delta_+)=1/r_+$.
\end{proof}

\begin{example} When $(\mu_1,\mu_2,r_-,r_+)=(0,-1,1,-1)$, $\alpha=\frac{1}{2}$ and the asymptotic behavior of $\pi$ near the origin follows directly from its trigonometric form given in Proposition~\ref{prop:ex}:
$$\pi(r\cos \theta,r\sin\theta)\sim \cos\left(\frac{\theta}{2}-\frac{\pi}{4}\right)\sqrt{\frac{2}{\pi r}}=\sin\left(\frac{3\pi}{4}-\frac{\theta}{2}\right)\sqrt{\frac{2}{\pi r}},\text{ as }r\to 0^+.$$
\end{example}

\subsection{Tails asymptotics}

The tail behavior of the lateral densities $\pi_\pm(u)$ exhibits a \textit{phase transition}. To fully characterize the different asymptotic regimes, we must introduce the critical reflection parameters $r_\pm^\star$. These thresholds are entirely determined by the drift direction $\rho:=\mu_1/\mu_2$ and are given by
$$r_+^\star=-\frac{1}{r_-^\star}=\rho-\sqrt{\rho^2+1}.$$

\begin{theorem}[Exponential decay]\label{thm:infty} The lateral densities satisfy
\begin{equation}\label{eq:asymptotics}
    \pi_+(u)\asymp u^{-\kappa}\exp(-\gamma u),\quad\text{as }u\to+\infty,
\end{equation}
where the exponents $\gamma$ and $\kappa$ are given by
\begin{center}
\renewcommand{\arraystretch}{1.3}
\begin{tabular}{|c|c|c|c|}
\hline
Regime & $r_+ < r_+^\star$ & $r_+ = r_+^\star$ & $r_+ > r_+^\star$ \\
\hline 
$\gamma$ \rule[-2.5ex]{0pt}{6.5ex} & $\displaystyle\sqrt{\mu_1^2+\mu_2^2}-\mu_1$& $\displaystyle\frac{\mu_2}{r_+}$ & $\displaystyle 2\frac{r_+\mu_2-\mu_1}{1+r_+^2}$\\
\hline
$\kappa$ & $3/2$ & $1/2$ & $0$ \\
\hline
\end{tabular}
\end{center}
A symmetric result holds for $\pi_-(u)$ as $u\to -\infty$, with a threshold at $r_-=r_-^\star$.
\end{theorem}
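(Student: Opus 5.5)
The plan is to read off the decay of $\pi_+$ from the rightmost singularity of $\phi_+$, continued analytically past the imaginary axis. By the Corollary of Section~\ref{sec:bvp}, $\pi_+=\frac12\pi(\cdot,0)$ on $\mathbb R_+$, and $\phi_+$ is its Laplace transform, a priori defined for $\mathfrak{Re}(x)\leqslant 0$. An asymptotic $\pi_+(u)\asymp u^{-\kappa}e^{-\gamma u}$ should correspond to a first singularity of $\phi_+$ at $x=\gamma>0$ of the following types:
\begin{itemize}
\item a simple pole, giving $\kappa=0$;
\item a $(\gamma-x)^{1/2}$ branch point, giving $\kappa=3/2$;
\item a $(\gamma-x)^{-1/2}$ singularity, giving $\kappa=1/2$.
\end{itemize}

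To continue $\phi_+$, I evaluate \eqref{eq:FE} at $y=Y^-(x)$, as in the proof of Proposition~\ref{prop:bvp_disc}, but now for real $x>0$. This gives
$$\phi_+(x)=-\frac{k_-(x,Y^-(x))}{k_+(x,Y^-(x))}\,\phi_-(x).$$
On the right-hand side, $\phi_-$ is analytic for $\mathfrak{Re}(x)>0$, and $\phi_-(x)>0$ for real $x>0$ since it is the Laplace transform of a positive measure. Hence the only candidate singularities are two.

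The first is the branch point of $Y^-$, namely the positive root of $\mu_2^2-2\mu_1x-x^2$:
$$x^\ast=\sqrt{\mu_1^2+\mu_2^2}-\mu_1 .$$
There $Y^-$ behaves like $-\mu_2-c\sqrt{x^\ast-x}$.

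The second is a zero of the denominator $r_+x+Y^-(x)$. Squaring $r_+x-\mu_2=\sqrt{\mu_2^2-2\mu_1x-x^2}$ gives $x\bigl((1+r_+^2)x+2\mu_1-2r_+\mu_2\bigr)=0$. The nonzero root is
$$x_p=2\,\frac{r_+\mu_2-\mu_1}{1+r_+^2},$$
which is positive by \eqref{eq:rec}. It is a genuine zero, and not a spurious root of the squared equation, exactly when $r_+x_p-\mu_2\geqslant 0$ and $x_p\leqslant x^\ast$.

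I would check by direct computation that both conditions are equivalent to $r_+\geqslant r_+^\star$, with equality $x_p=x^\ast=\mu_2/r_+$ exactly at $r_+=r_+^\star$. This explains the three columns of the table:
\begin{itemize}
\item For $r_+>r_+^\star$, $\phi_+$ has a simple pole at $x_p<x^\ast$. Its residue is nonzero because $\phi_-(x_p)>0$ and $k_-(x_p,Y^-(x_p))=(r_--r_+)x_p\neq 0$. This gives $\gamma=x_p$ and $\kappa=0$.
\item For $r_+<r_+^\star$, the denominator does not vanish on $(0,x^\ast]$. The first singularity is then the square-root branch point $x^\ast$, with a nonzero $(x^\ast-x)^{1/2}$ coefficient, giving $\kappa=3/2$.
\item At $r_+=r_+^\star$, the denominator vanishes like $\sqrt{x^\ast-x}$ at the branch point, so $\phi_+\sim c\,(x^\ast-x)^{-1/2}$, giving $\kappa=1/2$.
\end{itemize}

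To pass from singularities to asymptotics, I would invoke a Tauberian or transfer theorem for Laplace transforms (Doetsch-type, as in \cite{tauberian}). It requires that $\phi_+$ extend analytically to a domain of the form $\{\mathfrak{Re}(x)<\gamma+\varepsilon\}\setminus[\gamma,\infty)$, with a prescribed local expansion at $\gamma$.

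The main obstacle is this global analyticity: we must show there is no other singularity on the line $\mathfrak{Re}(x)=\gamma$. That means ruling out other complex zeros of $k_+(x,Y^-(x))$ and the complex branch points of $Y^-$ there, and controlling growth so that the Tauberian theorem applies. I would handle it as follows:
\begin{itemize}
\item Use the fact that, for a positive measure, $|\phi_+(\gamma'+is)|<\phi_+(\gamma')$ when $s\neq0$, for real $\gamma'<\gamma$, to exclude singularities dominating the real one. This is a Pringsheim-type argument.
\item Compute explicitly the zeros of the quadratic $\mu_2^2-2\mu_1x-x^2$: these are $-\mu_1\pm\sqrt{\mu_1^2+\mu_2^2}$, both real, so the only branch point in $\mathfrak{Re}(x)>0$ is $x^\ast$.
\item Solve the same quadratic as above for the zeros of $k_+$, whose only nonzero root is $x_p$, real.
\end{itemize}

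The statement for $\pi_-$ as $u\to-\infty$ follows symmetrically. Apply the reflection $u\mapsto -u$, which exchanges the roles of $r_+$ and $r_-$ and changes $\mu_1$ to $-\mu_1$. Under this reflection the threshold becomes $r_-=r_-^\star=-1/r_+^\star$.
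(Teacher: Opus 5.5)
Your proposal is correct and follows the paper's proof. Both continue $\phi_+$ into the right half-plane via $\phi_+=-\frac{k_-(x,Y^-(x))}{k_+(x,Y^-(x))}\phi_-$, compare the branch point $x^\ast=\sqrt{\mu_1^2+\mu_2^2}-\mu_1$ with the pole $x_p$, read off the local singularity in each of the three regimes, and conclude with a Doetsch-type transfer theorem. You are more careful than the paper on two points it leaves implicit: you check that $x_p$ is a genuine zero of $k_+(x,Y^-(x))$ exactly when $r_+x_p-\mu_2\geqslant 0$, i.e. when $r_+\geqslant r_+^\star$, and you rule out non-real singularities. Your second condition $x_p\leqslant x^\ast$ is in fact automatic, since the discriminant at $x_p$ equals $(r_+x_p-\mu_2)^2\geqslant 0$.
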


\begin{proof}
To understand the tail behavior of $\pi_+(u)$ as $u \to+\infty$, we must study the singularities of its Laplace transform $\phi_+(x)$ strictly beyond its initial domain of convergence ($\mathfrak{Re}(x) \le 0$). According to classical asymptotic transfer theorems, the singularity in the right half-plane closest to the imaginary axis dictates the exponential decay of the original density. Through the functional equation~\eqref{eq:FE} and Morera's theorem, $\phi_+(x)$ can be meromorphically continued to the right half-plane via
$$\phi_+(x)=-\frac{k_-(x,Y^-(x))}{k_+(x,Y^-(x))}\phi_-(x).$$
This continuation faces two potential obstacles (singularities) in the right half-plane. The first is a square-root branch point 
$$x_+ := \sqrt{\mu_1^2 + \mu_2^2} - \mu_1 > 0,$$ 
that is the positive root of the discriminant of $K(x,\,\cdot \,)=0$. The second is a potential simple pole 
$$p := 2 \frac{r_+ \mu_2 - \mu_1}{1 + r_+^2},$$ 
arising from the denominator of the continuation formula.  Note that the recurrence condition~\eqref{eq:rec} ensures that $p>0$. The leftmost singularity $\gamma$ in the right half-plane is $\gamma=p$ when $r_+>r_+^\star$, and $\gamma=x_+$ when $r_+<r_+^\star$. These two singularities coincide at the critical value $r_+=r_+^\star$. In each case, the local behavior of $\phi_+(x)$ as $x\to \gamma^-$ is given by
\begin{align*}
    \phi_+(x)&\sim \frac{(r_--r_+)x_+\sqrt{2\|\mu\|_2}\phi_-(x_+)}{(r_+x_+-\mu_2)^2}\cdot\sqrt{x_+-x},& \text{if }r_+<r_+^\star,\\
    \phi_+(x)&\sim 2\frac{(r_--r_+)(r_+p-\mu_2)\phi_-(p)}{1+r_+^2}\cdot \frac{1}{p-x},&\text{if }r_+>r_+^\star,\\
    \phi_+(x)&\sim \frac{(r_--r_+)x_+\phi_-(x_+)}{\sqrt{2\|\mu\|_2}}\cdot\frac{1}{\sqrt{x_+-x}},&\text{if }r_+=r_+^\star,
\end{align*}
\textit{i.e.} $\phi_+(x)\asymp \phi_-(\gamma)(\gamma-x)^{\kappa-1}$, where the values of $\gamma$ and $\kappa$ are exactly those given by the table above. In each case, $\gamma>0$, hence cannot be a pole of $\phi_-$. Furthermore, $\phi_-(\gamma)>0$ (being the Laplace transform of a non-trivial positive measure evaluated at $\gamma>0$). By the transfer theorem for Laplace transforms (see \cite[Thm.~37.1]{doetsch}), this directly yields~\eqref{eq:asymptotics}. A symmetric argument applied to the continuation of $\phi_-(x)$ yields the analogous tail behavior for $\pi_-(u)$, concluding the proof.
\end{proof}

\begin{remark}[Geometric interpretation] The critical reflection vectors $R_\pm^\star:=(1,r_\pm^\star)^\intercal$ can be interpreted geometrically: their directions coincide with the bisectors of the angle between $\mu$ and the horizontal axis. Equivalently, they split the set of admissible directions for $R_\pm$ (where the process remains recurrent) exactly in half. We immediately recover the orthogonality between $R_+^\star$ and $R_-^\star$ (which is consistent with $r_+^\star r_-^\star=-1$). See Figure~\ref{fig:phases} for an illustration.
\end{remark}

\begin{figure}
    \centering
    \includegraphics[width=0.75\linewidth]{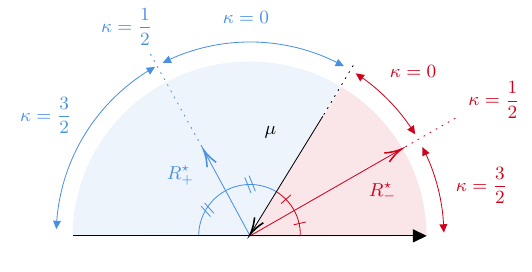}
    \caption{Geometric interpretation of the phase transitions. Blue and red exponents correspond to the asymptotics for $\pi_+$ and $\pi_-$, respectively. For a fixed drift $\mu$, the direction of $R_+^\star$ (\textit{resp.} $R_-^\star$) bisects the set of directions for $R_+$ (\textit{resp.} $R_-$) for which the process is recurrent, see Equation~\eqref{eq:rec}.}
    \label{fig:phases}
\end{figure}

Note that the asymptotics obtained in this corollary belong to the class of tail behaviors identified by Dai and Miyazawa in the quarter plane case, see~\cite{Dai_Miyazawa_2011}. We also point out that the model treated in Proposition~\ref{prop:ex} corresponds to a ``doubly critical'' case, in the sense that $r_+=r_+^\star$ and $r_-=r_-^\star$.

\section*{Acknowledgements}
The author is grateful to Sandro Franceschi for numerous insightful conversations and helpful suggestions throughout the development of this project.

\bibliographystyle{amsplain}
\bibliography{references}
\end{document}